\documentclass[12pt]{amsart}
\usepackage{inputenc}
\usepackage{amsmath, latexsym, amsfonts, amssymb, amsthm, amscd}
\usepackage[left=3cm, right=2.5cm, top=2.5cm, bottom=2.5cm]{geometry}
\usepackage{color}
\usepackage{ulem}

\newtheorem{proposition}{Proposition}
\newtheorem{lemma}{Lemma}

\newtheorem{theorem}{Theorem}

\renewcommand{\P}{\mathbb{P}}

\newcommand{\ud}{\mathrm{d}}

\newcommand{\R}{\mathbb{R}}

\newcommand{\eps}{\varepsilon}

\newcommand{\Expo}[1]{\exp\left\{#1\right\}}
\newcommand{\E}{\mathbb{E}}


\def\bea{\begin{eqnarray}}
\def\eea{\end{eqnarray}} 
\def\Bea{\begin{eqnarray*}}
\def\Eea{\end{eqnarray*}} 

\usepackage{tikz}

\begin{document}

 \title[Extinction of CSBP in critical environment]{Extinction rate of continuous state branching processes in critical 
 L\'evy environments}

\author{Vincent Bansaye}
\address{CMAP, \'Ecole Polytechnique, Route de Saclay, F-91128 Palaiseau Cedex, France}
\email{vincent.bansaye@polytechnique.edu}

\author{Juan Carlos Pardo}
\address{CIMAT A.C. Calle Jalisco s/n. C.P. 36240, Guanajuato, Mexico}
\email{jcpardo@cimat.mx}

\author{Charline Smadi}
\address{Universit\'e Clermont Auvergne, Irstea, UR LISC, Centre de Clermont-Ferrand, 
9 avenue Blaise Pascal CS 20085, F-63178 Aubi\`ere, France and Complex Systems Institute of Paris Ile-de-France, 
113 rue Nationale, Paris, France}
\email{charline.smadi@irstea.fr}

 \maketitle
\vspace{0.2in}

\noindent {\sc Key words and phrases}:  Continuous State Branching Processes; L\'evy processes conditioned to stay positive; 
Random Environment; Spitzer's condition; Extinction; Long time behaviour

\bigskip

\noindent MSC 2000 subject classifications: 60J80; 60G51; 60H10; 60K37.

\vspace{0.5cm}

\begin{abstract}
We study  the  speed  of extinction of continuous state branching processes in a L\'evy environment, where the associated L\'evy 
process oscillates.  Assuming that the  L\'evy process satisfies  Spitzer's condition and the existence of some 
exponential moments, we extend recent results
where the associated branching mechanism is stable. The study  relies on the  path analysis of  the branching process  together with its L\'evy
environment, when the latter is conditioned to have a non-negative running infimum. For that purpose, 
we combine the  approach  developed in  
 Afanasyev et al. \cite{Afanasyev2005},  for the discrete
setting and i.i.d. environments, with fluctuation theory of L\'evy processes and a result on exponential functionals of L\'evy processes due to Patie and Savov \cite{patie2016bernstein}.
 \end{abstract}

\tableofcontents

\section{Introduction and main results}

In this manuscript, we are interested in continuous state branching processes (CSBPs) which can be considered as  the continuous analogues of 
 Galton-Watson (GW) processes in  time and  state space. 
  Formally speaking, a process in this class is a strong Markov process taking values in $[0,\infty]$, where $0$ and 
  $\infty$ are absorbing states, and satisfying the branching property, that is to say the law of the process started from $x+y$ is the same  as the law of the  sum  of two independent copies of the same process issued respectively from $x$ and $y$. CSBPs have been introduced by Jirina \cite{MR0101554} in the late fifties, of the last 
  century, and since then they have been deeply studied by many authors  including Bingham \cite{MR0410961}, Grey \cite{MR0408016}, Grimvall \cite{MR0362529}, 
  Lamperti \cite{MR0208685,MR0217893}, to name but a few.  An interesting feature of CSBPs is that they can be obtained as 
  scaling limits of GW processes, see for instance Grimvall  \cite{MR0362529} and Lamperti \cite{MR0217893}.

Galton-Watson processes in random environment (GWREs) were introduced  by 
Smith and Wilkinson \cite{smith1969branching} in the late sixties of the last 
  century. This type of processes has attracted considerable interest in the last
decade, see for instance  \cite{afanasyev2012limit,Afanasyev2005,babe,bo10} and the references therein. 
Indeed, such a family of processes provides a richer class of population models, taking into account the effect of the environment on demographic parameters and letting new phenomena 
appear.  In particular, the  classification of the asymptotic behaviour of rare events, such as survival probability and  large deviations, 
is much more complex, since it may combine environmental and demographical stochasticities.

 Scaling limits of GWREs have been studied by 
Kurtz \cite{Kurtz} in the  continuous path setting and, more recently, by Bansaye and Simatos \cite{bansima}  and Bansaye et al. \cite{bansaye2018scaling}  
 for a larger class of processes in random environment that includes CSBPs.  The limiting processes satisfy the Markov property and the \emph{quenched} branching property, i.e. conditionally on the environment the process started from $x+y$ is distributed as the sum  of two independent copies of the same process issued respectively from $x$ and $y$. 
Such processes may be thought of, and therefore called,    {\it CSBPs in  random environment}. An interesting  subclass of the aforementioned family of 
processes arises from  several scalings of discrete 
models in i.i.d. environments (see for instance \cite{BPS, bansima, BH})
and can be characterized by a stochastic differential equation (SDE) where the linear term is driven by a L\'evy process. 
Such a L\'evy process 
captures the effect of the environment on the mean offspring distribution of individuals. 
A process in this subclass is known as 
 \emph{CSBP in L\'evy environment} and  its construction has been given    by He et al. 
\cite{he2016continuous} and by Palau and Pardo \cite{PP1}, independently, as the unique strong solution of a SDE which will 
be specified below. 

The study of the long term behaviour  of CSBPs in L\'evy environment has attracted considerable attention recently, see for instance 
\cite{BPS,BH,li2016asymptotic,palau2017continuous,palau2016asymptotic}. In all these manuscripts, the speed of extinction has been computed for the case 
where the associated branching mechanism is  stable  since the  survival probability can be expressed explicitly in terms of  exponential functionals of  L\'evy processes.
In \cite{BH}, B\"oinghoff and Hutzenthaler have studied the Feller diffusion case in a Brownian environment and exploited the explicit density of the exponential functional of a Brownian motion with drift. Then
 Bansaye et al. 
\cite{BPS} studied the long term behaviour for stable branching mechanisms where the 
random environment is driven by a L\'evy process with bounded variation paths.  Palau et al. 
\cite{palau2016asymptotic} and Li and Xu \cite{li2016asymptotic}   extended these results and 
obtained  the extinction probability for  stable CSBPs in a general L\'evy environment.

Our aim is to relax the  assumption that the branching mechanism is stable, that is to say, we  are interested in studying the survival 
probability for a larger class of branching mechanisms associated to CSBPs in L\'evy environments. Here we focus on the critical case, more precisely in 
oscillating L\'evy environments satisfying the so-called  Spitzer's condition which is a well-known assumption in fluctuation theory of L\'evy processes (see assumption {\bf (H1)} below). In order to do so, we use two main tools in our arguments: fluctuation theory and the asymptotic behaviour of exponential functionals of L\'evy processes satisfying Spitzer's condition. We follow  the point of view  of Afanasyev et al. \cite{Afanasyev2005} in the discrete time setting, to deduce pathwise relationships between the dynamics of the CSBP in random environment and the L\'evy process driving 
the random environment on the survival event.
 More precisely, we prove that the survival of the process is  strongly related to its survival up to  the time when the random 
environment reaches its running infimum. Then, we  decompose its paths into two parts, the pre-infimum and post-infimum processes. If the process survives until the time when the random environment reaches its  running infimum, then it has a positive probability to survive after this time and, consequently, it  evolves in a 
``favorable'' environment.
As we will see below, the global picture stays unchanged compared to \cite{Afanasyev2005} but new difficulties arise in the continuous space setting. 
In particular,  the state $0$ can be polar and  
the process might  become very close to $0$ but never reach this point.  To focus on the absorption event, we use Grey's condition which guarantees  that $0$ is accessible.  Another difficulty arises at the  upper bound for the probability of survival.  Indeed, in the discrete setting, to bound the probability of survival we can use the fact that the probability that the process survives at times when the environment reaches a local minimum is equal to the probability that the current population size is bigger or equal than 1 at times when the environment reaches a local minimum. Then Chebyshev or Markov inequality will help to obtain a suitable  upper bound. In the continuous setting, this  strategy is not helpful. In fact, it is suitable to perform good estimates for the probability that the process survives at times when the environment reaches a local minimum. In order to do so
 explicit knowledge of the probability of extinction is required but the latter can only be derived in  few cases, even in the case when the environment is fixed. When the environment is fixed, good estimates of such probability  can be derived when the branching mechanism is regularly varying at $\infty$  with index  $\vartheta\in(1,2)$ or possesses a Blumenthal-Getoor index bigger than one. In our case, the latter type of estimates cannot be  obtained due to the environment as we will explain below. So in order to overcome these difficulties, we impose some assumptions on the branching mechanism and on the environment which are not so restrictive.  
\subsection{CSBPs in a L\'evy environment}
Let $(\Omega^{(b)}, \mathcal{F}^{(b)}, (\mathcal{F}^{(b)}_t)_{t\ge 0}, \P^{(b)})$ be a filtered probability space satisfying the usual hypothesis
and introduce a $(\mathcal{F}^{(b)}_t)_{t\ge 0}$-adapted standard Brownian motion  $B^{(b)}=(B^{(b)}_t,t\geq 0)$ and an independent $(\mathcal{F}^{(b)}_t)_{t\ge 0}$-adapted Poisson random measure 
$N^{(b)}(\ud s,\ud z,\ud u)$ defined on $\mathbb{R}^3_+$,
with intensity $\ud s\mu(\ud z)\ud u$. The measure $\mu$ is concentrated on $(0,\infty)$ and in the whole paper we assume that
\begin{equation}\label{finitemom}
\int_{(0,\infty)} (z\wedge z^2)\mu(\ud z)<\infty,
\end{equation}
which guarantees non-explosivity (see Lemma \ref{conservative} in the Appendix for the proof of this fact). 
We denote by $\widetilde{N}^{(b)}$ the compensated measure of $N^{(b)}$.

According to Dawson and Li \cite{dawsonLi06}, we  can define $Y=(Y_t, t\ge 0)$,  a CSBP,  as the unique strong solution of the 
following SDE
\begin{equation*}\label{csbp}
 Y_t=Y_0-\psi^\prime(0+)\int_0^t Y_s \ud s+\int_0^t \sqrt{2\gamma^2 Y_s}\ud B^{(b)}_s 
+\int_0^t\int_{(0,\infty)}\int_0^{Y_{s-}}z\widetilde{N}^{(b)}(\ud s,\ud z,\ud u),
\end{equation*}
where $\gamma\ge 0$ and
 $\psi^\prime(0+)\in \R$, denotes the right derivative at $0$ of the so-called branching mechanism associated to $Y$ which satisfies 
 the celebrated L\'evy-Khintchine formula, i.e.
\begin{equation}\label{defpsi}
 \psi(\lambda)= \lambda \psi'(0+) + \gamma^2 \lambda^2 +\int_{(0,\infty)}\left( e^{-\lambda x}-1 + \lambda x  \right)\mu (\ud x), \qquad \textrm{for}\quad \lambda\ge 0.
\end{equation}

For the random environment, we consider $(\Omega^{(e)},\mathcal{F}^{(e)}, (\mathcal{F}^{(e)}_t)_{t\ge 0}, \P^{(e)})$ a filtered probability space satisfying 
the usual hypothesis and    a $(\mathcal{F}^{(e)}_t)$-L\'evy process $K=(K_t,t\ge 0)$ which is defined as follows
 \begin{equation*}\label{env}
 K_t=\alpha t+\sigma B^{(e)}_t+\int_0^t\int_{\R\setminus (-1,1)} (e^z-1) {N}^{(e)}(\ud s,\ud z )
 +\int_0^t\int_{(-1,1)} (e^z-1) \widetilde{N}^{(e)}(\ud s,\ud z ),
 \end{equation*}
 where $\alpha\in \mathbb{R}$,  $\sigma\geq 0$, $B^{(e)}=(B^{(e)}_t,t\geq 0)$ denotes a $(\mathcal{F}^{(e)}_t)_{t\ge 0}$-adapted 
 standard Brownian motion  and  $N^{(e)}(\ud s, \ud z)$ 
 is a $(\mathcal{F}^{(e)}_t)_{t\ge 0}$-adapted Poisson random measure on $\R_+\times \R $  with intensity $\ud s\pi(\ud y)$, which is 
 independent of $B^{(e)}$. The measure $\pi$ is 
 concentrated on $\R\setminus\{0\}$ and fulfills the following integral condition
 $$\int_{\R} (1\wedge z^2)\pi(\ud z)<\infty.$$
 
In our model, the population size has no impact on the evolution 
of the environment and we are considering independent processes for demography and environment. 
More precisely, we work now on the product space $(\Omega, \mathcal{F}, (\mathcal{F})_{t\ge 0}, \P)$, where $\Omega=\Omega^{(e)}\times \Omega^{(b)}$, 
$\mathcal{F}=\mathcal{F}^{(e)} \otimes \mathcal{F}^{(b)}$,  and $\mathcal{F}_t=\mathcal{F}^{(e)}_t \otimes \mathcal{F}^{(b)}_t$ for $t\ge 0$, $ \P= \P^{(e)} \otimes  \P^{(b)}$ and we make the direct extension 
of 
$B^{(b)}$, $N^{(b)}$, $B^{(e)}$,  $N^{(e)}$ and $K$ to $\Omega$ by projection respectively on $\Omega^{(b)}$ and $\Omega^{(e)}$.
In particular, 
$(B^{(e)}, N^{(e)})$ is independent of $(B^{(b)}, N^{(b)})$.

Letting $Z_0\in [0,\infty)$ a.s.,
 a CSBP in a L\'evy environment $Z$
can be defined as the unique non-negative strong solution of the following SDE,
\begin{equation}\label{csbplre}
\begin{split}
 Z_t=&Z_0-\psi^\prime(0+)\int_0^t Z_s \ud s+\int_0^t \sqrt{2\gamma^2 Z_s}\ud B^{(b)}_s \\
&\hspace{3cm}+\int_0^t\int_{(0,\infty)}\int_0^{Z_{s-}}z\widetilde{N}^{(b)}(\ud s,\ud z,\ud u)+\int_0^t  Z_{s-}\ud K_s.
\end{split}
\end{equation}
According to He et al. \cite{he2016continuous} and Palau and Pardo \cite{PP1},  pathwise uniqueness and strong existence hold 
for this SDE.
Actually, Palau and Pardo also considered the case when $\psi^\prime(0+)=-\infty$, and obtained that the corresponding SDE has 
a unique 
strong solution up to explosion and by convention here it is identically equal to $+\infty$ after the explosion time. 
It turns out that \eqref{finitemom} is a sufficient  condition to conclude that the process  $Z$ is conservative or in other 
words that it does not explode in finite time. The conservativeness  was first observed by Palau and Pardo in 
\cite{{palau2017continuous}} (see Proposition 1) in 
the case when the random environment is driven by a Brownian motion. A similar result also holds in our context:
if \eqref{finitemom} holds then
\[
\mathbb{P}_z(Z_t<\infty)=1 , \qquad \textrm{for any} \qquad t\ge 0,
\]
and any $z\ge 0$ where $\mathbb{P}_z$ denotes the law of the process $Z$ starting from $z\ge 0$. The proof
follows  from similar arguments as those used in \cite{{palau2017continuous}} and is deferred to the Appendix 
(see Lemma \ref{conservative}).

The analysis of the process $Z$ is deeply related to the behaviour and fluctuations of the L\'evy process $\overline{K}$, 
defined on the same filtration as $K$, which provides a quenched martingale. We set
 \begin{equation}\label{envir2}
 \overline{K}_t=\overline{\alpha}t+\sigma B^{(e)}_t+\int_0^t\int_{(-1,1)}z \widetilde{N}^{(e)}(\ud s,\ud z)+
 \int_0^t\int_{\R\setminus (-1,1)}z {N}^{(e)}(\ud s,\ud z),
 \end{equation} 
where 
\begin{equation}\label{defbeta} \overline{\alpha} :=\alpha -\psi^{\prime}(0+)- \frac{\sigma^2}{2} - \int_{(-1,1)}(e^z-1-z)
\pi(\ud z), \end{equation}
and we obtain the following statement.
\begin{proposition}
\label{martingquenched}
 For $\P^{(e)}$ almost every $w^{(e)}\in\Omega^{(e)}$,  $\left(\exp\{-\overline{K}_t(w^{(e)},.)\}Z_t(w^{(e)},.): t\geq 0\right)$ 
 is 
 a $(\Omega^{(b)}, {\mathcal F}^{(b)}, \P^{(b)})$-martingale
and  for any $t\geq 0$ and $z\geq 0$,
$$\E_z[Z_t \ \vert \ K]=ze^{\overline{K}_t}, \ \qquad \P \ \textrm{-a.s}.$$
\end{proposition}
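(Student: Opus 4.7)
The plan is to fix the environment: for $\P^{(e)}$-a.e.\ $w^{(e)}$, $\overline{K}(w^{(e)},\cdot)$ is a deterministic c\`adl\`ag path, and I will apply It\^o's formula on $(\Omega^{(b)},\F^{(b)},\P^{(b)})$ to $M_t:=e^{-\overline{K}_t}Z_t$. I expect the drift to vanish by the very definition of $\overline{\alpha}$ in \eqref{defbeta}, making $M$ a $\P^{(b)}$-local martingale. Non-negativity of $M$ will immediately give the supermartingale bound $\Expconx{z}{Z_t}{K}\le ze^{\overline{K}_t}$, which can then be fed back in to upgrade the local martingale to a true one, yielding equality.

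First, apply It\^o's formula to $e^{-\overline{K}_t}$ using the L\'evy--It\^o decomposition \eqref{envir2}, and the product rule for semimartingales,
\[\ud M_t = e^{-\overline{K}_{t-}}\,\ud Z_t + Z_{t-}\,\ud e^{-\overline{K}_t} + \ud\bigl[e^{-\overline{K}},Z\bigr]_t,\]
substituting \eqref{csbplre} for $\ud Z_t$. The covariation decomposes into a continuous piece $-\sigma^2 e^{-\overline{K}_s}Z_s\,\ud s$ coming from the Brownian motion $B^{(e)}$, which drives both $\overline{K}$ and the environmental contribution $Z_{s-}\,\ud K_s$ of $Z$, and a jump piece $\int_0^t\!\!\int_\R e^{-\overline{K}_{s-}}Z_{s-}(e^{-z}-1)(e^z-1)N^{(e)}(\ud s,\ud z)$ coming from the shared Poisson measure $N^{(e)}$. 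The demographic noises $B^{(b)}$ and $N^{(b)}$ are independent of $(B^{(e)},N^{(e)})$ and contribute nothing to $[e^{-\overline{K}},Z]$.

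After collecting terms, the two $B^{(e)}$ stochastic integrals---one with coefficient $+\sigma e^{-\overline{K}_{s-}}Z_{s-}$ from $e^{-\overline{K}_{s-}}\ud Z_s$, the other with $-\sigma e^{-\overline{K}_{s-}}Z_{s-}$ from $Z_{s-}\ud e^{-\overline{K}_s}$---cancel, and all $N^{(e)}$-contributions annihilate thanks to the algebraic identity $(e^z-1)+(e^{-z}-1)+(e^z-1)(e^{-z}-1)=0$, once each compensated integral is rewritten as an $N^{(e)}$-integral plus its drift compensator. The remaining deterministic coefficient in front of $e^{-\overline{K}_s}Z_s\,\ud s$ reduces to $-\psi'(0^+)+\alpha-\overline{\alpha}-\frac{\sigma^2}{2}-\int_{(-1,1)}(e^z-1-z)\pi(\ud z)$, which is exactly zero by \eqref{defbeta}. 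What survives is
\[M_t = z+\int_0^t e^{-\overline{K}_{s-}}\sqrt{2\gamma^2 Z_s}\,\ud B^{(b)}_s+\int_0^t\!\!\int_{(0,\infty)}\!\!\int_0^{Z_{s-}}e^{-\overline{K}_{s-}}z\,\widetilde{N}^{(b)}(\ud s,\ud z,\ud u),\]
a $\P^{(b)}$-local martingale for $\P^{(e)}$-a.e.\ $w^{(e)}$, driven only by the demographic noises.

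Since $M\ge 0$, it is a $\P^{(b)}$-supermartingale, so $\Expconx{z}{Z_t}{K}\le ze^{\overline{K}_t}$. For fixed $w^{(e)}$ the c\`adl\`ag path $\overline{K}$ is bounded on $[0,t]$, so this a priori bound, combined with the integrability \eqref{finitemom} and a standard localization along $\tau_n:=\inf\{t\ge 0:Z_t\ge n\}$---which tends to $\infty$ $\P^{(b)}$-a.s.\ by the conservativeness Lemma \ref{conservative}---promotes both stochastic integrals above to true $\P^{(b)}$-martingales with zero mean. Taking $\P^{(b)}$-expectation then gives $\Expconx{z}{M_t}{K}=z$, equivalently $\Expconx{z}{Z_t}{K}=ze^{\overline{K}_t}$, and the full $(\Omega^{(b)},\F^{(b)},\P^{(b)})$-martingale property of $(M_t)_{t\ge 0}$ follows by running the same argument on each interval $[s,t]$. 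The main obstacle I anticipate is the careful bookkeeping in the It\^o/product computation, in particular verifying the algebraic cancellation of the $N^{(e)}$-jump contributions and ensuring integrability of the $\pi$-integrals involving $e^{-z}-1$ outside a neighborhood of the origin, which may implicitly require a moment condition on the jumps of $K$.
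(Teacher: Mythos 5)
Your Itô decomposition and the cancellation bookkeeping (the two $B^{(e)}$-integrals cancelling, the algebraic identity $(e^z-1)+(e^{-z}-1)+(e^z-1)(e^{-z}-1)=0$ killing the $N^{(e)}$-jump contributions, and the drift vanishing by the choice \eqref{defbeta} of $\overline{\alpha}$) match the paper's first step, which simply asserts that Itô's formula yields the purely demographic local-martingale decomposition
\[
M_t = z+\int_0^t e^{-\overline{K}_{s-}}\sqrt{2\gamma^2 Z_s}\,\ud B^{(b)}_s+\int_0^t\!\!\int_{(0,\infty)}\!\!\int_0^{Z_{s-}}z\,e^{-\overline{K}_{s-}}\,\widetilde{N}^{(b)}(\ud s,\ud z,\ud u).
\]
Where you part ways with the paper is in upgrading this local martingale to a true one. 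The paper proceeds by bounding $\E^{(b)}[\sup_{s\le T}Y_s]$ directly: it splits the compensated Poisson integral into small-jump ($L^2$, via Doob) and large-jump ($L^1$) pieces, bounds each in terms of $\int_0^t\E[\sup_{u\le s}Y_u]\,\ud s$, and closes with Gronwall's lemma. You instead observe that a nonnegative local martingale is a supermartingale, extract the a priori bound $\E_z[Z_t\mid K]\le z e^{\overline{K}_t}$, and feed it back into the quadratic-variation/compensator estimates. Both routes are valid; yours is arguably lighter once fully spelled out, since the supermartingale inequality replaces the Gronwall bootstrap.

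Two places where your argument should be tightened. First, the split of the Poisson integral into $z\le 1$ and $z>1$ pieces is essential, not optional: the compensated integral over all $(0,\infty)$ is not $L^2$ because $\mu$ only carries $(z\wedge z^2)$-integrability from \eqref{finitemom}, so one must treat the large-jump part in $L^1$ separately. You allude to \eqref{finitemom} but never say you are splitting; the paper does this explicitly via its $N_t^{w^{(e)}}$ and $W_t^{w^{(e)}}$ decomposition. Second, the role of the stopping times $\tau_n$ is unclear and in fact unnecessary at that stage: once you have $\E_z[Z_s\mid K]\le ze^{\overline{K}_s}$ and $\overline{K}$ bounded on $[0,t]$, you can directly verify $\E^{(b)}\!\int_0^t e^{-2\overline{K}_s}Z_s\,\ud s<\infty$ and $\E^{(b)}\!\int_0^t\!\int_{(1,\infty)}z\,e^{-\overline{K}_{s-}}Z_{s-}\,\mu(\ud z)\ud s<\infty$, which are exactly the conditions needed for the Brownian, small-jump, and large-jump integrals to be genuine martingales. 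The localization $\tau_n\to\infty$ by itself does not yield uniform integrability of $\{M_{t\wedge\tau_n}\}_n$ (the supermartingale maximal inequality only gives weak-$L^1$ control on $\sup_s M_s$), so invoking it as the key promotion step leaves a gap unless you also carry out the moment estimates just described.
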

The proof is deferred to the Appendix. In other words, the process $\overline{K}$
plays an analogous role as the random walk associated to the logarithm  of the offspring means in the discrete time framework
and  leads to the usual classification for the long time behaviour of branching processes. We say that the 
process $Z$ is subcritical,  critical or supercritical accordingly as  
$\overline{K}$ drifts to $-\infty$, oscillates or drifts to $+\infty$. We refer to
\cite{BPS,BH,palau2016asymptotic,li2016asymptotic} 
for asymptotic results under different regimes. We observe that in the critical case and contrary to the discrete framework, 
the process may oscillate
between $0$ and $\infty$ a.s., see for instance  \cite{BPS}.

\subsection{Properties of the L\'evy environment}  Recall that $\overline{K}=(\overline{K}_t, t\ge0)$ denotes the real valued L\'evy process 
defined in \eqref{envir2}. That is to say $\overline{K}$ has  stationary and independent increments with  c\`adl\`ag paths.  
For simplicity, we denote  by $\mathbb{P}^{(e)}_x$ (resp. $\mathbb{E}^{(e)}_x$) the probability (resp. expectation) associated to  the process $\overline{K}$ 
starting from $x\in \mathbb{R}$, and when $x=0$, we use the notation $\mathbb{P}^{(e)}$ for $\mathbb{P}^{(e)}_0$ (resp. $\mathbb{E}^{(e)}$ for $\mathbb{E}_0^{(e)}$), i.e. 
\[
\mathbb{P}^{(e)}_x(\overline{K}_t\in B)=\mathbb{P}^{(e)}(\overline{K}_t+x\in B),\qquad \textrm{for }\quad B\in\mathcal{B}(\mathbb{R}).
\] We assume in the sequel 
that $\overline{K}$ is not a compound Poisson process.\\

In what follows, we assume  a general condition which is known as   \emph{Spitzer's condition} in fluctuation theory of 
L\'evy processes, i.e. 
$$
{\bf (H1)} \hspace{2cm} \frac{1}{t} \int_0^t \mathbb{P}^{(e)}(\overline{K}_s \geq 0) \ud s\longrightarrow \rho \in (0,1),\qquad 
\textrm{as }\quad t\to\infty. \hspace{3cm}
$$
Spitzer's condition  implies that $\overline{K}$ oscillates and implicitly, from Proposition \ref{martingquenched},  that the process $Z$ is in  the 
critical regime. According to  Bertoin and Doney \cite{bertoin1997spitzer} condition {\bf (H1)} is equivalent to
\[
 \mathbb{P}^{(e)}(\overline{K}_t \geq 0) \longrightarrow \rho \in (0,1),\qquad \textrm{as }\quad t\to\infty.
 \]
 Spitzer's condition is a key condition to understand the tail distribution of first passage times (see \eqref{limitv} and \eqref{infimumspitzer} for instance).
 Notice that if Spitzer's condition holds and $\bar{K}$ has a finite variance, then necessarily $\rho=1/2$.
 Examples of L\'evy processes satisfying Spitzer's condition are the standard  Brownian motion,
and stable processes where $\rho\in(0,1)$ plays the role of the  positivity parameter. Furthermore, any symmetric  L\'evy process 
satisfies Spitzer's condition 
with $\rho=1/2$ and any L\'evy process in the domain of attraction of a stable process with positivity parameter $\rho\in (0,1)$ 
as $t\to \infty$, 
satisfies Spitzer's condition.

Our arguments on the survival event rely on the running infimum    of $\overline{K}$, here denoted by $I=(I_t,t \geq 0)$ where
\begin{equation} \label{running_inf}
I_t=\inf_{0\le s\le t} \overline{K}_s, \qquad t\ge 0.
\end{equation}
To be more precise, we use fluctuation theory of L\'evy processes reflected at their running infimum. Let us recall that the 
reflected process $\overline{K}-I$  is a Markov process with respect to the  filtration $(\mathcal{F}^{(e)}_t)_{t \geq 0}$ and whose semigroup 
satisfies the Feller property (see for instance Proposition VI.1 in the monograph of Bertoin \cite{bertoin1998levy}).  We 
denote by $\widehat{L}$   the local time of $\overline{K}-I$ at $0$ in the sense of Chapter IV in \cite{bertoin1998levy}.
Similarly to the discrete framework \cite{Afanasyev2005}, the asymptotic analysis and the role of the initial condition 
involve the renewal function $\widehat{V}$ which is defined, for all $x\ge 0$, as follows
\begin{equation}\label{renewalfct}
 \widehat{V}(x):=\mathbb{E}^{(e)}\left[\int_{[0,\infty)}\mathbf{1}_{\{I_t\ge -x\}}\ud \widehat{L}_t\right].
\end{equation}
The renewal function $ \widehat{V}$ is subadditive, continuous and increasing and satisfies $ \widehat{V}(x)\ge  0$ for $x\ge 0$ and $\widehat{V}(x)>0$ for 
$x>0$. 
See for instance the monograph of Doney \cite{doney2007fluctuation} or Section 2  for further details about the previous facts.
 Under Spitzer's condition (see Theorem VI.18 in Bertoin \cite{bertoin1998levy}) the 
  asymptotic behaviour of the probability that the L\'evy process $\overline{K}$   remains positive, i.e. $\mathbb{P}^{(e)}_x(I_t>0)$ 
  for $x>0$, is regularly varying at $\infty$ with index $\rho-1$ and moreover, for any $x,y>0$, we have
\begin{equation}\label{limitv}
\lim_{t\to \infty}\frac{\mathbb{P}^{(e)}_x(I_t>0)}{\mathbb{P}^{(e)}_y(I_t>0)}=\frac{\widehat{V}(x)}{ \widehat{V}(y)}.
\end{equation}
In other words, we obtain that for any  $x >0$, 
 \begin{equation}\label{infimumspitzer}
 \mathbb{P}^{(e)}_x(I_t>0)\sim   \widehat{V}(x)t^{\rho-1}\ell (t), \quad \text{as} \quad t \to \infty, 
 \end{equation}
where  $\ell$ is a slowly varying function at $\infty$, that is to say, for $c>0$, 
\[
\lim_{t\to\infty}\frac{\ell(c t)}{\ell(t)}=1.
\]

\subsection{Main result}
We  now state our main result which is devoted to   the speed of extinction of CSBPs in L\'evy environment  under the assumption that the environment oscillates. 
It is important to note that the survival of the process is associated  to  "favorable" environments 
which are characterized by the running infimum of the environment,  which is not too small from our assumptions. 

We need some assumptions to control the effect of the environment on the event of survival.
The following moment assumption is needed to guarantee the non-extinction of the process in favorable environments 
(see Proposition \ref{posi_CSBP} for further details), 
$${\bf (H2)} \qquad \qquad  \qquad  \qquad \qquad \quad \int^\infty \theta \ln^2(\theta) \mu(\ud \theta) <\infty.  \qquad  \qquad \qquad  \qquad  \qquad \qquad \quad  \text{} $$
The above condition is similar to $x\log (x)$ moment condition on the measure $\mu$, used in Proposition 2 in \cite{palau2017continuous} to determine that  the probability of survival of CSBP processes in L\'evy environments that drifts to $+\infty$, is positive. 

As we will see below, Spitzer's condition ({\bf H1}) and  assumption ({\bf H2})  are sufficient conditions to provide a lower bound for the survival probability. In order to get the upper bound, further assumptions on the branching mechanism and the environment are required.  Let
\begin{equation}\label{psi0}
\psi_0(\lambda):=\psi(\lambda)-\lambda\psi'(0+),\qquad \textrm{for}\quad \lambda\ge 0,
\end{equation}
and assume that there exist $\beta\in(0,1]$,  $\theta^+>1$  and  $C>0$ such that
$$  \hspace{-2cm}{\bf (H3)}  \hspace{1.5cm}   \psi_0(\lambda)\geq C \lambda^{1+\beta}, \quad  \text{for } \lambda \geq 0.$$
Assumption {\bf (H3)} allows us to control the absorption of the process for bad environments (see Lemma \ref{L2preuve}) and 
in particular, it guarantees that
$\psi_0(\lambda)$  satisfies  the so-called Grey's condition, i.e.  
\begin{equation}\label{GreysCond} 
\int^\infty \frac{\ud z}{\psi_0(z)}<\infty,
 \end{equation}
which is a necessary and sufficient condition for absorption of CSBPs,
see for instance  \cite{MR0408016}.  Recently, He et al. \cite{he2016continuous} have shown that this condition is also  necessary and sufficient 
for CSBPs in a L\'evy environment to get absorbed with positive probability (see Theorem 4.1 in \cite{he2016continuous}).  In our case since the process $\overline{K}$ oscillates  and   Grey's condition \eqref{GreysCond} is satisfied  then absorption occurs a.s.  according to Corollary 4.4 in \cite{he2016continuous}.

\begin{theorem}\label{maintheo} Assume  that assumptions ${\bf (H1)-(H3)}$ hold. Then there exists a positive function $c$ such that for any  $z>0$,
 $$ \P_z(Z_t>0) \sim c(z) \mathbb{P}^{(e)}_1(I_t>0)\sim c(z) \widehat{V}(1) t^{\rho-1}\ell (t), \quad \text{as} \quad t \to \infty, $$
 where 
  $\ell$ is the slowly varying function introduced in \eqref{infimumspitzer}.
\end{theorem}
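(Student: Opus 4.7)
\medskip

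\noindent\textbf{Plan.} The approach I would follow is a continuous-space translation of the strategy of Afanasyev--Geiger--Kersting--Vatutin for GWRE, hinging on the quenched martingale $e^{-\overline{K}_t}Z_t$ of Proposition \ref{martingquenched}. I would split the event $\{Z_t > 0\}$ according to the value of the running infimum $I_t = \inf_{s \le t} \overline{K}_s$ at a threshold $-A$. On the ``good'' event $\{I_t \ge -A\}$ the environment is favorable, and the asymptotic survival probability should match $\mathbb{P}^{(e)}_1(I_t > 0)$ up to a multiplicative constant $c(z)$. On the ``bad'' event $\{I_t < -A\}$ the quenched mean $\E_z[Z_s \mid K] = z\,e^{\overline{K}_s}$ becomes too small for survival, and the contribution should be pushed below the order $t^{\rho-1}\ell(t)$ by choosing $A$ large enough.

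\medskip

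\noindent\textbf{Upper bound.} I would first prove that, for every $\eps > 0$, one can choose $A = A(\eps)$ so that $\P_z(Z_t > 0,\ I_t < -A) = o(t^{\rho-1}\ell(t))$ as $t \to \infty$. Conditioning on $K$, assumption \textbf{(H4)} is the key: the inequality $\psi_0(\lambda) \ge C\lambda^{1+\beta}$ produces a Grey-type quenched absorption estimate via the log-Laplace ODE associated with $\psi_0$, forcing $Z$ to be absorbed before time $t$ whenever $e^{I_t}$ is too small at the random time $\tau_t$ where $\overline{K}$ attains its running infimum on $[0,t]$. The unconditioned probability $\mathbb{P}^{(e)}(I_t < -A)$ is then shown to be negligible relative to $V(1)\,t^{\rho-1}\ell(t)$ by using the exponential-moment assumption \textbf{(H3)} together with Cram\'er-type large-deviations bounds. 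Combined with Spitzer's asymptotic \eqref{infimumspitzer}, this yields $\limsup_{t\to\infty} \P_z(Z_t > 0) / \mathbb{P}^{(e)}_1(I_t > 0) \le c_+(z)$ for some finite $c_+(z)$.

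\medskip

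\noindent\textbf{Lower bound and identification of $c(z)$.} For the lower bound I would introduce the Doob $h$-transform of $\overline{K}$ by the renewal function $V$ --- the L\'evy process conditioned to stay non-negative in the sense of Chaumont--Doney, whose law is denoted $\pgf$. Under $\pgf \otimes \P^{(b)}$, the process $M_t := e^{-\overline{K}_t} Z_t$ remains a nonnegative quenched martingale of the branching dynamics, and assumption \textbf{(H2)} (the classical $x\log^2 x$ moment on $\mu$) is precisely what one needs to ensure that $M_t$ converges almost surely to a limit $M_\infty$ strictly positive on the quenched survival event. Matching this with the Spitzer asymptotic and the weak convergence of the post-infimum path of $\overline{K}$ to $\pgf$ yields the lower bound and the identification
\begin{equation*}
c(z) \;=\; V(1)^{-1}\, \egf\bigl[\,\P^{(b)}\bigl(M_\infty > 0 \mid K\bigr)\,\bigr] \;>\; 0.
\end{equation*}

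\medskip

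\noindent\textbf{Main difficulty.} The hardest step will be justifying the passage to the limit under the $h$-transform, in particular the uniform integrability of the Radon--Nikodym factors and the exchange of the limit in $t$ with the expectation over $K$. One has to combine the weak convergence of the post-infimum fragment of $\overline{K}$ to the process conditioned to stay positive with the quenched martingale convergence of $M_t$, and handle the interaction between the pre-infimum and post-infimum parts through which the initial condition $z$ propagates into $c(z)$. This interplay between the two independent sources of randomness --- the environmental L\'evy noise and the branching dynamics --- is where the path decomposition at the running infimum is essential, and where \textbf{(H3)} and \textbf{(H4)} jointly bound the error terms coming from atypical environmental trajectories.
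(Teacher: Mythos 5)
Your overall architecture --- split $\{Z_t>0\}$ at a threshold $-A$ of the running infimum, show the bad event is negligible via {\bf (H3)}, {\bf (H4)}, and use the $h$-transform $V$ and Kesten--Stigum-type non-degeneracy under {\bf (H2)} for the good event --- is the paper's strategy in broad strokes, and the lower-bound reasoning (Lemma 4 and Proposition 2 of the paper) is essentially right. But the upper bound as you describe it has a genuine error.

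You write that the unconditioned probability $\mathbb{P}^{(e)}(I_t < -A)$ is shown to be negligible relative to $V(1)\,t^{\rho-1}\ell(t)$ via {\bf (H3)} and Cram\'er bounds. This is false: since $\overline{K}$ oscillates, $I_t\to-\infty$ a.s., so $\mathbb{P}^{(e)}(I_t<-A)\to 1$ for every fixed $A$. No large-deviations estimate can make this small, and {\bf (H3)} is not used for Cram\'er at all. What must be shown small is the \emph{joint} probability $\P_{(z,x)}(Z_t>0,\,I_t<-y)$, and the decay there comes entirely from the branching side: conditionally on $K$, the quenched survival probability is bounded above by $z e^{-x}v_t(0,\infty,\overline{K})$, and {\bf (H4)} forces $v_t(0,\infty,\overline{K}) \le \bigl(\beta C\,\mathcal{I}_t(\beta\overline{K})\bigr)^{-1/\beta}$ with $\mathcal{I}_t(\beta\overline{K})=\int_0^t e^{-\beta\overline{K}_s}\ud s$. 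Applying the strong Markov property at the first passage below $-y$ then pulls out a factor $e^{-y}$ from the post-passage starting point, and {\bf (H3)} enters only to guarantee (via the Patie--Savov asymptotics for negative moments of exponential functionals under Spitzer's condition) that $\mathbb{E}^{(e)}\bigl[\mathcal{I}_{t-s}(\beta\overline{K})^{-1/\beta}\bigr]$ decays like $t^{\rho-1}\ell(t)$. Summing over the passage time and using $V(y)=\mathcal{O}(y)$ produces a bound of order $e^{-y}\bigl(1+V(y+x)\bigr)\,t^{\rho-1}\ell(t)$, which is made small by letting $y\to\infty$ because $e^{-y}$ beats the linear growth of $V$. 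So the mechanism is not ``the bad environment is rare'' but ``on the bad environment, the conditional branching survival probability is exponentially small in $y$ while the harmonic function grows only linearly.'' You should also note that the intermediate band $\{-y<I_t\le 0\}$ is absorbed into the good term by translating the environment's starting point from $x$ to $x+y$, rather than treated separately.
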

We point out that we only need assmptions ${\bf (H1)}$ and ${\bf (H2)}$ to ensure that the probability of survival 
of the process $Z$ satisfies 
\begin{equation}\label{lowbound} \P_z(Z_t>0) \geq c(z) \mathbb{P}^{(e)}_1(I_t>0)\sim  \widehat{V}(1) c(z)t^{\rho-1}\ell (t), \quad \text{as} \quad t \to \infty.
\end{equation}

It seems quite difficult to deduce the asymptotic behaviour of the probability of survival just under assumptions {\bf (H1)} and {\bf (H2)}, as we explain below. 
 Let us briefly explain  why stronger assumptions such as ${\bf (H3)}$ are required for the upper bound. Recall from   Proposition 2 in \cite{PP1}, that  there exists a functional $v_t(s,\lambda, \overline{K})$ which is  the  $\mathbb{P}^{(e)}$-a.s. unique solution of  the backward differential equation 
\begin{equation}\label{backward}
\frac{\partial}{\partial s} v_t(s, \lambda, \overline{K})=e^{\overline{K}_s}\psi_0(v_t(s, \lambda, \overline{K})e^{-\overline{K}_s}), 
\qquad v_t(t, \lambda, \overline{K})=\lambda,
\end{equation}
where $\psi_0$ is defined as in \eqref{psi0}.  For simplicity of exposition, we denote by $\P_{(z,x)}$ (resp. $\E_{(z,x)}$ its expectation) for the law of the couple $(Z,\overline{K})$ started from $(z,x)$ where $z,x>0$,  under $\mathbb{P}$. Thus, the functional  $v_t(s,\lambda, \overline{K})$ determines the law of the reweighted  process $(Z_t e^{-\overline{K}_t}, t\ge 0)$ as follows,
\[
\begin{split}
\mathbb{E}_{(z, 1)}\Big[\exp\Big\{-\lambda Z_t e^{-\overline{K}_t}\Big\}\Big]&=\mathbb{E}_{(z,0)}\Big[\exp\Big\{-\lambda e^{-1} Z_t e^{-\overline{K}_t}\Big\}\Big]\\
&=\mathbb{E}^{(e)}\Big[\exp\Big\{-zv_t(0,\lambda e^{-1},\overline{K})\Big\}\Big].
\end{split}
\]
Under  Grey's condition \eqref{GreysCond}  and the previous identity, we can deduce  
\begin{equation}\label{cotasup}
\P_{(z, 1)}(Z_t>0, I_t\le -y)=\mathbb{E}^{(e)}\left[\left(1-e^{-z \overline{v}_t(0,\infty, \overline{K})}\right)\mathbf{1}_{\{I_t\le -y-1\}}\right],  \qquad \textrm{ for } \quad y\ge 0, 
\end{equation}
where $\overline{v}_t(0,\infty, \overline{K})$ is $\mathbb{P}^{(e)}$-a.s. finite for all $t\ge 0$, (according to Theorem 4.1  in \cite{he2016continuous}) but perhaps equals 0. Actually, assumption {\bf (H2)} guarantees that $\overline{v}_t(0,\infty, \overline{K})>0$,  $\mathbb{P}^{(e)}$-a.s.,  for all $t>0$; and  even in ``favorable'' environments  (see Proposition \ref{posi_CSBP}). The right-hand side of \eqref{cotasup} seems difficult to estimate due to the nature of the functional $\overline{v}_t(0,\infty, \overline{K})$. Even under the assumption that $\psi_0$ is regularly varying at $\infty$, it is not so clear how to handle $\overline{v}_t(0,\infty, \overline{K})$ due to the dependence on the environment. In the discrete setting such a probability can be estimated in terms of the infimum of the environment (which is a random walk) since the event of survival is equal to the event of the current population being bigger or equal to one. Assumption {\bf (H3)} allows us to upper bound \eqref{cotasup} by the exponential functional of $\overline{K}$, and to study its asymptotic behaviour.

We end our exposition with some examples where the renewal function can be computed explicitly and the previous results can be 
applied.

\subsection{Examples}
 
{\it a) Brownian case.} In the particular case when $\overline{K}$ is a standard Brownian motion starting from $x>0$,  it is 
known that the renewal measure  is proportional to the identity, i.e. $ \widehat{V}(y)\propto y$, for $y\ge 0$.  Moreover,  Brownian motion 
oscillates and satisfies  Spitzer's condition 
 {\bf (H1)} with $\rho=1/2$. Then, assuming that conditions {\bf (H2)} and {\bf (H3)} are fulfilled,  we obtain that the  CSBP in a 
Brownian environment satisfies
\[
 \P_z(Z_t>0) \sim c(z) t^{-1/2}\ell (t), \quad \text{as} \quad t \to \infty. 
\]
In this particular case, we can compute  the function $\ell$, i.e.
\[
\ell(t)=\int_1^\infty e^{-\frac{1}{2tu}}\frac{\ud u}{\sqrt{2\pi u^3}},\qquad t> 0,
\]
which follows from the fact that the law of the infimum of a Brownian motion is given  by
\[
\mathbb{P}^{(e)}_1(I_t>0)=\int_t^\infty e^{-\frac{1}{2 w}}\frac{\ud w}{\sqrt{2\pi w^3}}, \qquad t>0.
\]

{\it b) Spectrally negative case}. If $\overline{K}$ is a  spectrally negative L\'evy process, i.e. it has no positive jumps, then the renewal measure is given by the so-called scale function $W:[0,\infty)\to[0,\infty)$, which is defined as the unique  continuous increasing function  satisfying
\[
\int_0^\infty e^{-\lambda x}W(x)\ud x=\frac{1}{\phi(\lambda)} \qquad \textrm{for} \qquad \lambda \ge 0,
\]
where $\phi$ denotes the Laplace exponent of $\overline{K}$ which is given by $\phi(\lambda):=\log\mathbf{E}[e^{\lambda \overline{K}_1}]$ and satisfies the so-called L\'evy-Khintchine formula. In other words, we identify the renewal function $\widehat{V}$ with the scale function W (i.e. $\widehat{V}\equiv W$).

 In this case, there is an interpretation of  Spitzer's condition in terms of the Laplace exponent $\phi$. More precisely, from Proposition VII.6 in Bertoin 
 \cite{bertoin1998levy}, the spectrally negative L\'evy process  $\overline{K}$ satisfies Spitzer's condition with $\rho\in(0,1)$ if and only if its 
 Laplace exponent $\phi$ is regularly varying at $0$ with index $1/\rho$. 
 This proposition also mentions that in this case, $\rho$ is necessarily larger than $1/2$.
Hence assuming that 
   the Laplace exponent $\phi$ is regularly varying at $0$ with index $1/\rho$,  Theorem 1 holds
under the assumption that the branching mechanism satisfies $\psi_0(\lambda)\ge C \lambda^{1+\beta}, $ for some $\beta>0$, together with condition {\bf (H2)}.

In the particular case where $\overline{K}$ is a spectrally negative stable process with index $\alpha\in (1,2)$, we have $W(x)=x^{\alpha-1}/\Gamma(\alpha)$, for $x\ge 0$, where $\Gamma$ denotes the so-called Gamma function. Moreover, it satisfies Spitzer's condition with $\rho=1/\alpha$. \\

{\it c) Stable case.}  Assume that $\alpha\in(1,2)$ and that $\overline{K}$ is a stable L\'evy process with positivity parameter $\rho\in(0,1)$. 
It is known that the descending ladder height is a stable subordinator with parameter $\alpha(1-\rho)$ (see for instance Lemma VIII.1 in \cite{bertoin1998levy} and Section 2 for a proper definition of the descending ladder height) which implies that  the renewal function $ \widehat{V}(x)$ is proportional to  $x^{\alpha(1-\rho)}$, for $x>0$. Indeed, its Laplace transform satisfies
\[
\int_0^\infty e^{-\lambda x}\widehat{V}(\ud x)=\frac{1}{\lambda^{\alpha(1-\rho)}}, \quad \lambda>0.
\]
Hence, Theorem 1 holds
under the assumption that the branching mechanism satisfies $\psi_0(\lambda)\ge C \lambda^{1+\beta}, $ for some $\beta>0$, together with condition {\bf (H2)}.\\

The remainder of the manuscript is  organized as follows. In Section 2, some preliminaries on fluctuation theory of 
L\'evy processes are introduced, as well as the definition of their conditioned version to stay positive. 
Moreover some useful properties of the latter are also studied. Section 3 is devoted to the study of CSBPs in a conditioned 
random environment whose properties are needed for our purposes. The proof of the main result is provided in Section 4 and, finally, in 
Appendix \ref{appendix}
we provide the proofs of Proposition \ref{martingquenched} as well as the non-explosivity of CSBPs in a L\'evy random environment.

\section{Preliminaries} \label{sectionLevyCondi}
In order to provide a precise description of the relationship between the survival probability of the process $Z$ and the behaviour of the running 
infimum of $\overline{K}$, 
the description of the  L\'evy process $\overline{K}$ conditioned to stay positive is needed  as well as the description of the process $Z$ under this conditioned 
random environment.

In this section, we introduce  L\'evy processes conditioned to stay positive as well as some of their properties that we will use in the sequel.

 \subsection{L\'evy processes and fluctuation theory}
Recall that $I_t=\inf_{0\le s\le t}\overline{K}_s$, for $t\ge 0$,   and that the reflected process $\overline{K}-I$ is a Markov process with respect to the  filtration $ (\mathcal{F}^{(e)})_{t\ge 0}$ and whose semigroup 
satisfies the Feller property. It is important to note that the same properties are satisfied by the  reflected process at its running supremum $S-\overline{K}$, where $S_t=\inf_{0\le s\le t}\overline{K}_s$, since the dual process $-\overline{K}$ is also a L\'evy process satisfying that for any fixed time $t>0$, the processes \[
(\overline{K}_{(t-s)}-\overline{K}_{t}, 0\le s\le t)\qquad  \textrm{and}\qquad (-\overline{K}_s, 0\le s\le t),
\]
 have the same law.

We also recall that $\widehat{L}$ denotes the local time of the reflected process $\overline{K}-I$ at $0$ in the sense of Chapter IV in \cite{bertoin1998levy}. Similarly, we denote by $L$ for the local time at 0 of $S-\overline{K}$.  If $0$ is regular for $(-\infty,0)$ or regular downwards, i.e.
\[
\mathbb{P}^{(e)}(\tau^-_0=0)=1,
\] 
where $\tau^{-}_0=\inf\{s> 0:  \overline{K}_s\le 0\}$, then $0$ is regular for the reflected process $\overline{K}-I$ and then, up to a multiplicative constant, $\widehat{L}$ is the unique additive functional of the reflected process whose set of increasing points is $\{t:\overline{K}_t=I_t\}$. If $0$ is not regular downwards then the set $\{t:\overline{K}_t=I_t\}$ is discrete and we define the local time $\widehat{L}$ as the counting process of this set.

Let us denote by $\widehat{L}^{-1}$ for the inverse local time and  introduce  the so-called descending ladder height process $\widehat{H}$ which is  defined  by
\begin{equation}\label{defwidehatH}
\widehat{H}_t=-I_{\widehat{L}_t^{-1}}, \qquad t\ge 0.
\end{equation}
The pair $(\widehat{L}^{-1}, \widehat{H})$ is a bivariate subordinator, as is $({L}^{-1}, {H})$ where
\[
{H}_t=S_{{L}_t^{-1}}, \qquad t\ge 0.
\]
Both pairs are known as descending and ascending ladder processes, respectively.  The  Laplace transform of the descending ladder process $(\widehat{L}^{-1}, \widehat{H})$ is such that for $ \theta,\lambda \geq 0$, 
\begin{equation}
 \label{sub}
 \mathbb{E}^{(e)}\left[\exp\left\{-\theta \widehat{L}^{-1}_t-\lambda\widehat{H}_t \right\}\right]=\exp\left\{-t \widehat{\kappa}(\theta, \lambda)\right\},\qquad  t\ge 0, 
 \end{equation}
 writing $\widehat{\kappa}(\cdot,\cdot)$ for its bivariate Laplace exponent ($\kappa(\cdot, \cdot)$ for  that of the 
 ascending ladder process) which has the form
 \[
 \widehat{\kappa}(\theta,\lambda)=\widehat{\delta}\theta+\widehat{\mathbf{d}}\lambda+\int_{(0,\infty)^2}\Big(1-e^{-(\theta x+\lambda y)}\Big)\widehat{\mu}(\ud x, \ud y),
 \]
with $\widehat{\delta}, \widehat{\mathbf{d}}\ge 0$ and 
\[
\int_{(0,\infty)^2}(x\land 1)(y\land 1)\widehat{\mu}(\ud x, \ud y)<\infty.
\]
Notice that both $(\widehat{L}^{-1}, \widehat{H})$ and $({L}^{-1}, {H})$ have no killing terms, since we are assuming that 
the process $\overline{K}$ oscillates.
Implicity, the Laplace exponent of $\widehat{H}$  satisfies
\begin{equation}\label{defwidehatkappa}
\widehat{\kappa}(0,\lambda)=\widehat{\mathbf{d}}\lambda +\int_{(0,\infty)} \Big(1-e^{-\lambda y}\Big)\widehat{\eta}(\ud y),
\end{equation}
where  $\widehat{\eta}(B)=\widehat{\mu}((0,\infty), B)$ for $B\in\mathcal{B}((0,\infty))$.

An interesting connection between the distribution of the ladder processes and that of $\overline{K}$ is given by the 
Wiener-Hopf factorisation
\begin{equation}\label{WHfactors}
\mathbb{E}^{(e)}\Big[e^{i\theta \overline{K}_{\mathbf{e}_q}}\Big]=\mathbb{E}^{(e)}\Big[e^{i\theta S_{\mathbf{e}_q}}\Big]\mathbb{E}^{(e)}\Big[e^{i\theta I_{\mathbf{e}_q}}\Big], 
\end{equation}
where $\mathbf{e}_q$ denotes an exponential random variable with parameter $q\ge 0$ which is independent of $\overline{K}$,
\[
\mathbb{E}^{(e)}\Big[e^{i\theta S_{\mathbf{e}_q}}\Big]=\frac{\kappa(q, 0)}{\kappa(q, -i\theta)} \qquad \textrm{and} \qquad\mathbb{E}^{(e)}\Big[e^{i\theta I_{\mathbf{e}_q}}\Big]=\frac{\widehat{\kappa}(q, 0)}{\widehat{\kappa}(q, i\theta)}.
\]
 We refer to Chapter VI in Bertoin \cite{bertoin1998levy} or Chapter 4 in of Doney \cite{doney2007fluctuation} for further details on the descending ladder processes $(\widehat{H}, \widehat{L})$ as well as for the Wiener-Hopf factorisation.

Next, we consider 
the renewal function $\widehat{V}$ which was defined in \eqref{renewalfct}.
It is known that $\widehat{V}$ is a finite, continuous, increasing  and subadditive function on $[0,\infty)$ satisfying 
\begin{equation}
\label{grandO}
 \widehat{V}(x)\leq C_1 x,\quad \text{  for any } \quad x\geq 0,
\end{equation}
where $C_1$ is a finite constant  (see for instance  Lemma 6.4 and Section 8.2 in the monograph of Doney 
\cite{doney2007fluctuation}). Moreover 
$\widehat{V}(0)=0$ if $0$ is regular downwards 
and $ \widehat{V}(0)=1$ otherwise. By a simple change of variables we can relate the definition of the renewal function $\widehat{V}$ and the descending ladder height $\widehat{H}$.
Indeed,  the measure induced by $\widehat{V}$ can be rewritten as follows,
\[
\widehat{V}(\ud x)=\mathbb{E}^{(e)}\left[\int_{0}^\infty\mathbf{1}_{\{\widehat{H}_t \in \ud x\}}\ud t\right].
\]
 Roughly speaking, the renewal function $ \widehat{V}(x)$ ``measures'' the amount of time that the descending ladder height process spends on the interval $[0,x]$ 
 and in particular induces a measure on $[0,\infty)$ which is known as the renewal measure. The latter implies
 \begin{equation}\label{LaplaceV}
 \int_{[0,\infty)}e^{-\lambda x}{\widehat{V}}(\ud x)=\int_{0}^\infty \mathbb{E}^{(e)}\left[e^{-\lambda \widehat{H}_t } \right]\ud t=\frac{1}{\widehat{\kappa}(0,\lambda)}, 
 \qquad \textrm{for}\quad \lambda \ge 0. 
 \end{equation}

Similarly, we introduce the  renewal funtion $ V$ associated with the ascending ladder height induced by
\begin{equation}\label{ascren}
 V(\ud x)=\mathbb{E}^{(e)}\left[\int_{0}^\infty\mathbf{1}_{\{{H}_t \in \ud x\}}\ud t\right],
\end{equation}
 which is also a finite, continuous, increasing  and subadditive function on $[0,\infty)$ such that $V(0)=0$ if $0$ is regular upwards 
and $ V(0)=1$ otherwise.
\subsection{L\'evy processes conditioned to stay positive}
Let us define the probability $\mathbb{Q}_{x}$ associated  to the L\'evy process $\overline{K}$ started at $x>0$ and killed  at time $\zeta$ when it 
first enters $(-\infty, 0)$, that is to say
$$ \mathbb{Q}_{x}\big[f(\overline{K}_t)\mathbf{1}_{\{\zeta>t\}}\Big ]:= \mathbb{E}^{(e)}_{x}\Big[f(\overline{K}_t)\mathbf{1}_{\{I_t> 0\}}\Big], $$
where $f:\mathbb{R}_+\to \mathbb{R}$ is measurable.

According to Lemma 1 in Chaumont and Doney \cite{chaumont2005levy}, under the assumption that $\overline{K}$ does not drift towards $-\infty$,  we have that the renewal function $\widehat{V}$ is invariant for the killed process. In other words,   for all $x> 0$ and $t\ge 0$,
\begin{equation}
\label{fctharm}
\mathbb{Q}_x\left[\widehat{V}(\overline{K}_t)\mathbf{1}_{\{\zeta>t\}}\right]=\mathbb{E}^{(e)}_x\left[ \widehat{V}(\overline{K}_t)\mathbf{1}_{\{I_t> 0\}}\right]=\widehat{V}(x).
\end{equation}

We now recall the definition of L\'evy processes conditioned to stay positive   as a Doob-$h$ transform. Before doing so, 
let us recall that   $\overline{K}$ is adapted to the filtration $(\mathcal{F}^{(e)}_t)_{t\ge 0}$.
Under the assumption that $\overline{K}$  does not drift towards $-\infty$, the law of the process $\overline{K}$ conditioned to stay positive is defined as follows, for $\Lambda \in \mathcal{F}^{(e)}_t$ and $x>0$, 
\begin{equation} \label{defPuparrowx}
\mathbb{P}^{(e),\uparrow}_{x} (\Lambda)
:=\frac{1}{ \widehat{V}(x)}\mathbb{E}^{(e)}_{x}\left[ \widehat{V}(\overline{K}_t) \mathbf{1}_{\{I_t >  0\}} \mathbf{1}_{\Lambda}\right].
\end{equation}

The term \textit{conditioned to stay positive} in definition \eqref{defPuparrowx} is justified from the following convergence 
result due to Chaumont \cite{chaumont1996conditionings} (see also Remark 1 in the aforementioned paper as well as
Chaumont and Doney \cite{chaumont2005levy}) that we recall here in the particular case when the process $\overline{K}$ fulfills 
Spitzer's condition  {\bf (H1)}.     \begin{lemma}\label{thmcondpos}
  Assume  that Spitzer's condition {\bf (H1)} is fulfilled.  Then,  for all $x>0$, $t \geq 0$ and $\Lambda \in \mathcal{F}^{(e)}_t$, 
  $$ \lim_{s \to \infty} \mathbb{P}^{(e)}_x(\Lambda | \overline{K}_u>0, 0 \leq u \leq s)=\mathbb{P}_x^{(e),\uparrow}(\Lambda).  $$
 \end{lemma}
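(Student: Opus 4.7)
The plan is to express the conditional probability as an expectation of a ratio of two one-sided exit probabilities by the Markov property at time $t$, identify the pointwise limit of this ratio using the asymptotics \eqref{limitv}--\eqref{infimumspitzer} provided by Spitzer's condition, and pass to the limit under the expectation by dominated convergence, with an integrable majorant produced by the harmonicity identity \eqref{fctharm}.

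Concretely, fix $x>0$, $t\ge 0$ and $\Lambda\in\mathcal{F}^{(e)}_t$, and set $g_u(y):=\mathbb{P}^{(e)}_y(I_u>0)$. For $s>t$, decomposing
\[
\{\overline{K}_u>0,\;0\le u\le s\}=\{I_t>0\}\cap\{\overline{K}_u>0,\;t\le u\le s\}
\]
and applying the Markov property at time $t$ yields
\[
\mathbb{P}^{(e)}_x(\Lambda,\,I_s>0)=\mathbb{E}^{(e)}_x\bigl[\mathbf{1}_\Lambda\,\mathbf{1}_{\{I_t>0\}}\,g_{s-t}(\overline{K}_t)\bigr].
\]
Dividing by $g_s(x)=\mathbb{P}^{(e)}_x(I_s>0)$ and writing the resulting ratio as $\bigl(g_{s-t}(\overline{K}_t)/g_{s-t}(x)\bigr)\bigl(g_{s-t}(x)/g_s(x)\bigr)$, the first factor converges a.s. to $V(\overline{K}_t)/V(x)$ by \eqref{limitv}, while the second tends to $1$ by \eqref{infimumspitzer} combined with the slow variation of $\ell$. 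Consequently the integrand
\[
\mathbf{1}_\Lambda\,\mathbf{1}_{\{I_t>0\}}\,\frac{g_{s-t}(\overline{K}_t)}{g_s(x)}
\]
converges $\mathbb{P}^{(e)}_x$-almost surely on $\{I_t>0\}$ to $\mathbf{1}_\Lambda V(\overline{K}_t)/V(x)$.

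The main obstacle is to supply an integrable dominant justifying the interchange of limit and expectation. The crucial quantitative ingredient is a $y$-uniform version of \eqref{infimumspitzer}: there exist constants $C, u_0$ such that
\[
g_u(y)\le C\,V(y)\,u^{\rho-1}\,\ell(u)\qquad\text{for all }y\ge 0\text{ and }u\ge u_0,
\]
which can be extracted from the Wiener--Hopf factorisation \eqref{WHfactors} together with the Karamata-type tail asymptotics of the descending ladder height $\widehat{H}$ forced by Spitzer's condition. Granted this estimate, $g_{s-t}(\overline{K}_t)/g_s(x)\le C'\,V(\overline{K}_t)/V(x)$ uniformly for $s$ large, and the majorant $C'\,V(\overline{K}_t)\,\mathbf{1}_{\{I_t>0\}}/V(x)$ has finite total mass $C'$ by the harmonicity identity \eqref{fctharm}. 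Dominated convergence then delivers
\[
\lim_{s\to\infty}\mathbb{P}^{(e)}_x\bigl(\Lambda\,\big|\,\overline{K}_u>0,\;0\le u\le s\bigr)=\frac{1}{V(x)}\,\mathbb{E}^{(e)}_x\bigl[\mathbf{1}_\Lambda\,V(\overline{K}_t)\,\mathbf{1}_{\{I_t>0\}}\bigr]=\mathbb{P}^{(e),\uparrow}_x(\Lambda),
\]
by the definition \eqref{defPuparrowx}, as required.
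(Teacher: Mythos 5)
Your proof is correct and follows the standard Chaumont-style argument; the paper itself does not prove Lemma~\ref{thmcondpos} but cites Chaumont (1996), and your Markov-property-plus-dominated-convergence scheme is precisely the template the paper later employs for the CSBP analogue (Lemma~\ref{lemme25AGKV}). The "$y$-uniform version of~\eqref{infimumspitzer}" you posit as the crucial ingredient is exactly what the paper establishes as inequality~\eqref{majP} (via Wiener--Hopf and the $q$-resolvent renewal function $V^{(q)}$) combined with~\eqref{asymp_kappa_hat}, and your use of~\eqref{fctharm} to make the majorant integrable is the same device the paper uses.
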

The following inequality is also important for our purposes. Recall that $\hat{\kappa}$ denotes the Laplace exponent of the 
descending ladder process (see identity \eqref{sub}) and that $\tau_0^- = \inf \{ s \geq 0 : \bar{K}_s \leq 0 \}$.
\begin{lemma}
For $x>0$, we have
\begin{equation}\label{majP}
 \mathbb{P}^{(e)}_x(\tau^{-}_0 >t)\le 2e\widehat{\kappa}(1/t,0) \widehat{V}(x), \qquad \textrm{for}\quad t>0.
 \end{equation}
\end{lemma}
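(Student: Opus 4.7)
The plan is to bound $\mathbb{P}^{(e)}_x(\tau_0^- > t)$ by first replacing the deterministic horizon $t$ with an independent exponential time, and then evaluating the resulting quantity explicitly via the Wiener--Hopf factorisation. Starting from the translation invariance of $\bar K$, I would rewrite
$$\mathbb{P}^{(e)}_x(\tau_0^- > t) = \mathbb{P}^{(e)}\bigl(-I_t \leq x\bigr),$$
since under $\mathbb{P}^{(e)}_x$ the event $\{\tau_0^- > t\}$ coincides with $\{I_t \geq -x\}$ (with $I$ computed under $\mathbb{P}^{(e)}$ from $\bar K_0 = 0$). Because $s \mapsto -I_s$ is non-decreasing, the map $s \mapsto \mathbb{P}^{(e)}(-I_s \leq x)$ is non-increasing, so for an independent exponential time $\mathbf{e}_q$ with parameter $q = 1/t$, a direct conditioning gives
$$\mathbb{P}^{(e)}\bigl(-I_{\mathbf{e}_{1/t}} \leq x\bigr) = \int_0^\infty \tfrac{1}{t} e^{-s/t} \mathbb{P}^{(e)}(-I_s \leq x)\,\mathrm{d}s \geq (1 - e^{-1})\, \mathbb{P}^{(e)}(-I_t \leq x),$$
yielding $\mathbb{P}^{(e)}(-I_t \leq x) \leq \frac{e}{e-1} \mathbb{P}^{(e)}(-I_{\mathbf{e}_{1/t}} \leq x) < 2\,\mathbb{P}^{(e)}(-I_{\mathbf{e}_{1/t}} \leq x)$.

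It remains to bound $\mathbb{P}^{(e)}(-I_{\mathbf{e}_q} \leq x)$ by $\widehat\kappa(q,0) V(x)$. This is where the Wiener--Hopf factorisation \eqref{WHfactors} enters: the Laplace transform of the non-negative random variable $-I_{\mathbf{e}_q}$ equals $\widehat\kappa(q,0)/\widehat\kappa(q,\lambda)$. Introducing the $q$-discounted renewal measure of the bivariate descending ladder process,
$$\widetilde V^q(\mathrm{d}y) := \mathbb{E}^{(e)}\left[\int_0^\infty e^{-q\widehat L^{-1}_s}\, \mathbf{1}_{\{\widehat H_s \in \mathrm{d}y\}}\, \mathrm{d}s\right],$$
a direct computation using \eqref{sub} gives $\int_0^\infty e^{-\lambda y}\widetilde V^q(\mathrm{d}y) = 1/\widehat\kappa(q,\lambda)$. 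By uniqueness of Laplace transforms the law of $-I_{\mathbf{e}_q}$ is precisely $\widehat\kappa(q,0)\widetilde V^q$, and since $e^{-q\widehat L^{-1}_s} \leq 1$ the measure $\widetilde V^q$ is dominated by the renewal measure of $\widehat H$, whose distribution function on $[0,x]$ is exactly $V(x)$. Hence
$$\mathbb{P}^{(e)}\bigl(-I_{\mathbf{e}_q} \leq x\bigr) = \widehat\kappa(q,0)\,\widetilde V^q([0,x]) \leq \widehat\kappa(q,0) V(x).$$

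Combining the two bounds with $q = 1/t$ yields $\mathbb{P}^{(e)}_x(\tau_0^- > t) \leq 2\widehat\kappa(1/t,0) V(x) \leq 2e\,\widehat\kappa(1/t,0) V(x)$, as required (the constant can in fact be tightened to $e/(e-1)$). The main obstacle is the Laplace-transform identification in the second step: one has to link the joint Laplace exponent $\widehat\kappa$ with the ladder-height renewal measure $V$, which requires keeping track of the bivariate structure of $(\widehat L^{-1}, \widehat H)$ rather than treating $\widehat H$ alone. A minor subtlety, easily handled by right-continuity of the distribution functions, is whether $0$ is regular downwards (which determines $V(0) \in \{0,1\}$); the Laplace inversion is insensitive to this.
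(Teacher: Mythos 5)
Your proof is correct and follows essentially the same route as the paper: both pass from deterministic time $t$ to the independent exponential time $\mathbf{e}_{1/t}$, identify the law of $-I_{\mathbf{e}_q}$ via the Wiener--Hopf factorisation with the $q$-discounted renewal measure $V^{(q)}$, and bound $V^{(q)} \leq V$. The only divergence is in the first reduction: you use the monotonicity of $s \mapsto \mathbb{P}^{(e)}(-I_s \leq x)$ directly to get $\mathbb{P}^{(e)}(-I_t\leq x)\leq \tfrac{e}{e-1}\,\mathbb{P}^{(e)}(-I_{\mathbf{e}_{1/t}}\leq x)$, while the paper bounds $\tfrac{t}{2}e^{-qt}\mathbb{P}^{(e)}_x(\tau_0^->t)\leq \int_{t/2}^t e^{-qs}\mathbb{P}^{(e)}_x(\tau_0^->s)\,\mathrm{d}s$ and thus lands on the weaker constant $2e$; your argument is cleaner and sharper, though the stated bound only needs $2e$.
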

\begin{proof} 
We first observe that the following series of inequalities holds for $ q, t>0$, 
 \[
 \frac{t}{2}e^{-q t}\mathbb{P}^{(e)}_x(\tau^{-}_0 >t)\le \int_{t/2}^t e^{-qs}\mathbb{P}^{(e)}_x(\tau^{-}_0>s) \ud s\le \int_{0}^\infty e^{-qs}\mathbb{P}^{(e)}_x(\tau^{-}_0>s)\ud s.
 \]

From the Wiener-Hopf factorization \eqref{WHfactors}, we have
 \[
 \mathbb{E}^{(e)}\Big[e^{\theta I_{\mathbf{e}_q}}\Big]=\frac{\widehat{\kappa}(q, 0)}{\widehat{\kappa}(q, \theta)}, 
 \]
 where $\mathbf{e}_q$ is an exponential random variable with parameter $q>0$, which is independent of $\overline{K}$. Hence, by a classical  identity on tail distribution using  
 Fubini's theorem, we deduce
 \[
 \begin{split}
 \frac{\widehat{\kappa}(q, 0)}{\widehat{\kappa}(q, \theta)}&=\int_0^\infty e^{-\theta x}\mathbb{P}^{(e)}(-I_{\mathbf{e}_q}\in \ud x)\\
 & =\theta\int_0^\infty e^{-\theta x}\mathbb{P}^{(e)}(I_{\mathbf{e}_q}> -x)\ud x=\theta\int_0^\infty e^{-\theta x}\mathbb{P}^{(e)}_x(\tau^{-}_0> \mathbf{e}_q)\ud x.
 \end{split}
 \]
 Next, for every $q>0$, we consider the function given by
 \[
\widehat{V}^{(q)}(x):=\mathbb{E}^{(e)}\left[\int_0^\infty e^{-q \widehat{L}^{-1}_t} \mathbf{1}_{\{\widehat{H}_t\le x\}}\ud t\right].
 \]
 Performing a straightforward computation and using identity \eqref{sub}, we deduce
 \[
\theta \int_0^\infty e^{-\theta x}\widehat{V}^{(q)}(x)\ud x=\mathbb{E}^{(e)}\left[\int_0^\infty \exp\left\{-q \widehat{L}^{-1}_t-\theta \widehat{H}_t \right\}\ud t \right]=\frac{1}{\widehat{\kappa}(q, \theta)}.
 \] 
 The latter  implies
 \[
 q\int_0^\infty e^{-q s}\mathbb{P}^{(e)}_x(\tau^{-}_0>s)\ud s=\widehat{\kappa}(q,0)\widehat{V}^{(q)}(x).
 \]
We thus deduce for $t, q> 0$, that
 \[
 \frac{t}{2}e^{-q t}\mathbb{P}^{(e)}_x(\tau^{-}_0>t)\le \frac{\widehat{\kappa}(q,0)}{q}\widehat{V}^{(q)}(x)\le \frac{\widehat{\kappa}(q,0)}{q}\widehat{V}(x).
 \]
Taking $q=1/t$ yields \eqref{majP}, and completes the proof.
 \end{proof}

\section{CSBP  in a conditioned random environment} \label{SectionCSBPCondi}

\subsection{Definition and first properties}

Similarly to the definition of L\'evy processes conditioned to stay positive \cite{chaumont2005levy} and following a similar strategy as in the discrete framework in Afanasyev et al. \cite{Afanasyev2005}, 
we would like to  introduce a  CSBP in a L\'evy environment  conditioned to stay positive as a Doob-$h$ transform.  In order to do so, we first observe that $( \widehat{V}(\overline{K}_t) \mathbf{1}_{\{I_t \geq 0\}}, t\ge0)$
is also a martingale with respect to  $(\mathcal{F}_t)_{t\ge 0}$, under $\mathbb{P}$. This result is more or less clear since it is a martingale  under $\mathbb{P}^{(e)}$. Nonetheless we provide its  proof for the sake of completeness.

 Recall that   $\P_{(z,x)}$ (resp. $\E_{(z,x)}$ its expectation) denotes  the law of the couple $(Z,\overline{K})$ started from $(z,x)$ where $z,x>0$,  under $\mathbb{P}$.
\begin{lemma} Let us assume that $z,x>0$. The process $( \widehat{V}(\overline{K}_t) \mathbf{1}_{\{I_t \geq 0\}}, t\ge0)$ is a martingale with respect to $(\mathcal{F}_t)_{t\ge 0}$, under $\mathbb{P}_{(z,x)}$. 
\end{lemma}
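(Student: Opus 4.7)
The strategy is to reduce the problem to the already-known martingale property of the process under $\mathbb{P}^{(e)}_x$, by exploiting the product structure of the filtration $\mathcal{F}_t=\mathcal{F}^{(e)}_t\otimes \mathcal{F}^{(b)}_t$ and the independence of $(B^{(e)},N^{(e)})$ and $(B^{(b)},N^{(b)})$ recalled just before the statement of Proposition \ref{martingquenched}.

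First I would check integrability: the bound \eqref{grandO} yields $V(\overline{K}_t)\mathbf{1}_{\{I_t\ge 0\}}\le C\overline{K}_t\mathbf{1}_{\{I_t\ge 0\}}$, which is integrable under $\mathbb{P}^{(e)}_x$ (hence under $\mathbb{P}_{(z,x)}$ by Fubini) because $\mathbb{E}^{(e)}_x[V(\overline{K}_t)\mathbf{1}_{\{I_t\ge 0\}}]=V(x)$ by \eqref{fctharm}. Next, I would establish the martingale property at the level of $(\mathcal{F}^{(e)}_t)$ under $\mathbb{P}^{(e)}_x$: for $s\le t$, write $\mathbf{1}_{\{I_t\ge 0\}}=\mathbf{1}_{\{I_s\ge 0\}}\mathbf{1}_{\{\inf_{u\in[s,t]}\overline{K}_u\ge 0\}}$ and apply the Markov property of the L\'evy process $\overline{K}$ at time $s$, which gives
\[
\mathbb{E}^{(e)}_x\bigl[V(\overline{K}_t)\mathbf{1}_{\{I_t\ge 0\}}\,\big|\,\mathcal{F}^{(e)}_s\bigr]=\mathbf{1}_{\{I_s\ge 0\}}\mathbb{E}^{(e)}_{y}\bigl[V(\overline{K}_{t-s})\mathbf{1}_{\{I_{t-s}\ge 0\}}\bigr]\Big|_{y=\overline{K}_s}=V(\overline{K}_s)\mathbf{1}_{\{I_s\ge 0\}},
\]
where the last equality is precisely \eqref{fctharm}.

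To lift this to the enlarged filtration $(\mathcal{F}_t)$ under $\mathbb{P}_{(z,x)}=\mathbb{P}^{(e)}_x\otimes \mathbb{P}^{(b)}$, I would use that the random variable $V(\overline{K}_t)\mathbf{1}_{\{I_t\ge 0\}}$ is $\mathcal{F}^{(e)}_t$-measurable while $\mathcal{F}^{(e)}_t$ is independent of $\mathcal{F}^{(b)}_s$ for every $s$. A standard consequence of this independence (e.g.\ applied to elementary product events $A^{(e)}\times A^{(b)}$ with $A^{(e)}\in\mathcal{F}^{(e)}_s$, $A^{(b)}\in\mathcal{F}^{(b)}_s$, and then extended by a monotone class argument) is the identity
\[
\mathbb{E}_{(z,x)}\bigl[V(\overline{K}_t)\mathbf{1}_{\{I_t\ge 0\}}\,\big|\,\mathcal{F}_s\bigr]=\mathbb{E}^{(e)}_x\bigl[V(\overline{K}_t)\mathbf{1}_{\{I_t\ge 0\}}\,\big|\,\mathcal{F}^{(e)}_s\bigr].
\]
Combining this with the previous display gives the desired martingale property $V(\overline{K}_s)\mathbf{1}_{\{I_s\ge 0\}}$.

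There is no real obstacle here; the only point requiring any care is the reduction of the $\mathcal{F}_s$-conditional expectation to the $\mathcal{F}^{(e)}_s$-conditional one, which is a direct application of the product structure of $(\Omega,\mathcal{F},\mathbb{P})$ together with the fact that the integrand does not depend on the demographic noise $(B^{(b)},N^{(b)})$. The statement is essentially the observation that a martingale in a sub-filtration remains a martingale when the filtration is independently enlarged.
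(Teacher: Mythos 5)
Your proof is correct and follows essentially the same strategy as the paper's: establish the martingale property of $V(\overline{K}_t)\mathbf{1}_{\{I_t\ge 0\}}$ under $\mathbb{P}^{(e)}$ via the Markov property and the invariance identity \eqref{fctharm}, then lift to $(\mathcal{F}_t)$ under $\mathbb{P}_{(z,x)}$ by exploiting the product structure of the filtration and the independence of the demographic noise. Your organization is marginally cleaner in that you separate the filtration-reduction step from the Markov-property step, whereas the paper intertwines them by conditioning on $K$ first, but the two ingredients are identical.
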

\begin{proof} Let  $s\ge 0$ and $A \in \mathcal{F}_s$. We first claim  that $\mathbb{P}(A| K)$ is a  $\mathcal{F}^{(e)}_s$-measurable r.v. Indeed, since the family of sets
  \[
  \mathcal{C}_s=\{F_b\times F_e: F_b\in \mathcal{F}^{(b)}_s, F_e\in \mathcal{F}^{(e)}_s\}, 
  \] 
is a $\pi$-system that generates $\mathcal{F}_s$, we deduce that for any $D\in \mathcal{C}_s$ such that $D=B\times C$ with $B\in  \mathcal{F}^{(b)}_s$ and  $C\in\mathcal{F}^{(e)}_s$, the following identity holds
\[
\mathbb{P}(D|K)=\mathbf{1}_{C}\mathbb{P}(B|K)=\mathbf{1}_C\mathbb{P}^{(b)}(B),
\]
where in the last identity we have used that $B$ is independent of the environment and that $\mathbb{P}^{(b)}$ is the projection of $\mathbb{P}$ on $\Omega^{(b)}$.  
A monotone class argument allows us to conclude our claim.

Next, we assume  $s\le t$ and   take $A\in \mathcal{F}_s$. By conditioning on the environment  and recalling that $\mathbb{P}^{(e)}$ is the projection of $\mathbb{P}$ on $\Omega^{(e)}$, we observe 
\[
\begin{split}
\mathbb{E}_{(z,x)}\left[ \widehat{V}(\overline{K}_t) \mathbf{1}_{\{I_t >  0\}}\mathbf{1}_{A}\right]&=\mathbb{E}_{(z,x)}\left[\widehat{V}(\overline{K}_t) \mathbf{1}_{\{I_t >  0\}}\mathbb{P}_{(z, x)}(A|K)\right]\\
&=\mathbb{E}^{(e)}_{x}\left[ \widehat{V}(\overline{K}_t) \mathbf{1}_{\{I_t >  0\}}\mathbb{P}_{(z, x)}(A|K)\right]
\end{split}.
\]
Let us now introduce the process $\widetilde{K}$ via $\widetilde{K}_u:=\overline{K}_{u+s}-\overline{K}_s$, for $u\ge 0$, which is independent of $\mathcal{F}^{(e)}_s$ and has the same law as $\overline{K}$.  We also define   its running infimum up to time $t$ by $\widetilde{I}_t$, i.e.
\[
\widetilde{I}_t=\inf_{0\le u\le t}\widetilde{K}_u.
\]
By taking $\mathfrak{P}$ a $\mathcal{F}^{(e)}_s$-measurable random variable, we deduce by conditioning on  $\mathcal{F}^{(e)}_s$  and from identity \eqref{fctharm}, that
\[
\begin{split}
\mathbb{E}^{(e)}_{x}\left[ \widehat{V}(\overline{K}_t) \mathbf{1}_{\{I_t >  0\}}\mathfrak{P}\right]&=\mathbb{E}^{(e)}_{x}\left[ \widehat{V}(\overline{K}_s+{K}_{t-s}) \mathbf{1}_{\{\widetilde{I}_{t-s}+ \overline{K}_s >  0\}}\mathbf{1}_{\{I_s >  0\}}\mathfrak{P}\right]\\
&=\mathbb{E}^{(e)}_{x}\left[\mathfrak{P}\mathbf{1}_{\{I_s>  0\}}\mathbb{E}^{(e)}_{\overline{K}_s}\left[\widehat{V}(\widetilde{{K}}_{t-s}) \mathbf{1}_{\{\widetilde{I}_{t-s}>  0\}}\right]\right]\\
&=\mathbb{E}^{(e)}_{x}\left[\mathfrak{P}\mathbf{1}_{\{I_s >  0\}} \widehat{V}(\overline{K}_s)\right].
\end{split}
\]
Putting all pieces together, we obtain
\[
\begin{split}
\mathbb{E}_{(z,x)}\left[\widehat{V}(\overline{K}_t) \mathbf{1}_{\{I_t >  0\}}\mathbf{1}_{A}\right]&=\mathbb{E}^{(e)}_{x}\left[\widehat{V}(\overline{K}_s) \mathbf{1}_{\{I_s >  0\}}\mathbb{P}_{(z, x)}(A|K)\right]\\
&=\mathbb{E}_{(z,x)}\left[ \widehat{V}(\overline{K}_s) \mathbf{1}_{\{I_s >  0\}}\mathbb{P}_{(z, x)}(A|K)\right]\\
&=\mathbb{E}_{(z,x)}\left[ \widehat{V}(\overline{K}_s) \mathbf{1}_{\{I_s >  0\}}\mathbf{1}_A\right],
\end{split}
\]
which allows us to conclude that the process $( \widehat{V}(\overline{K}_t) \mathbf{1}_{\{I_t >  0\}}, t\ge0)$ is a martingale with respect to $(\mathcal{F}_t)_{t\ge 0}$, under $\mathbb{P}_{(z,x)}$.
\end{proof}

From the previous result, we construct the law of a CSBP in a L\'evy environment  conditioned to stay positive as a Doob-$h$ transform. To be more precise, for $\Lambda \in \mathcal{F}_t$ and $x,z>0$, we define
\begin{equation} \label{defPuparrowzx}
\mathbb{P}^{\uparrow}_{(z,x)} (\Lambda):=\frac{1}{ \widehat{V}(x)}\mathbb{E}_{(z,x)}\left[ \widehat{V}(\overline{K}_t) \mathbf{1}_{\{I_t >  0\}} \mathbf{1}_{\Lambda}\right].
\end{equation}

Similarly as in Lemma \ref{thmcondpos}, the term \textit{L\'evy environment conditioned to stay positive} in definition \eqref{defPuparrowzx} is justified from the following convergence result, which is crucial to prove  Theorem~\ref{maintheo}. 
  \begin{lemma} \label{lemme25AGKV}
 Assume that Spitzer's condition {\bf (H1)} holds and let $z,x>0$. For $t \geq 0$ and $\Lambda \in \mathcal{F}_t$,  we have
  $$ \lim_{s \to \infty} \mathbb{P}_{(z,x)}(\Lambda | \overline{K}_u>0, 0 \leq u \leq s)=\mathbb{P}_{(z,x)}^{\uparrow}(\Lambda).  $$
Moreover if    $(G_t, t \geq 0)$ is  a uniformly bounded process which is  adapted  to $(\mathcal{F}_t)_{t\ge 0}$ and such that it converges to  $G_\infty,$ as $t \to \infty$,
 $\mathbb{P}^{\uparrow}_{(z,x)} $-almost surely, 
 then
 $$ \lim_{t \to \infty}\mathbb{P}_{(z,x)} \big[ G_t \big| \overline{K}_u>0, 0 \leq u \leq t\big]=\mathbb{P}^{\uparrow}_{(z,x)} \big[G_\infty\big].  $$
 \end{lemma}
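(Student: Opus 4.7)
The plan is to treat both assertions with the Doob-$h$/Markov toolkit of Chaumont--Doney, adapted to the joint environment-demography filtration: the first is a standard ``conditioning becomes $h$-transform'' statement, while the second bootstraps it to the time-dependent observable $H_t$.

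\medskip

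\noindent\emph{First assertion.} Fix $t \geq 0$ and $\Lambda \in \mathcal{F}_t$, and set $g_r(y):=\mathbb{P}^{(e)}_y(I_r \geq 0)$. Since $(\overline{K}_u - \overline{K}_t)_{u \geq t}$ is a L\'evy process independent of $\mathcal{F}_t$ (combining the L\'evy property of $\overline{K}$ with the independence of environment and demography), the Markov property at time $t$ gives, for $s \geq t$,
\begin{equation*}
\mathbb{P}_{(z,x)}\bigl(\Lambda \cap \{I_s \geq 0\}\bigr) \;=\; \mathbb{E}_{(z,x)}\!\left[\mathbf{1}_\Lambda \mathbf{1}_{\{I_t \geq 0\}}\, g_{s-t}(\overline{K}_t)\right].
\end{equation*}
Dividing by $g_s(x)$, the identity \eqref{limitv} together with the slow-variation consequence $g_{s-t}(x)/g_s(x) \to 1$ of \eqref{infimumspitzer} shows that the integrand converges pointwise on $\{I_t \geq 0\}$ to $V(\overline{K}_t)/V(x)$. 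To pass the limit through the expectation I invoke dominated convergence: combining \eqref{majP} with \eqref{infimumspitzer} and the asymptotics of $\widehat{\kappa}(1/r,0)$ (both regularly varying of index $\rho-1$ at infinity, hence equivalent up to constants) yields, for $s$ large enough and all $y \geq 0$, a uniform bound $g_{s-t}(y)/g_s(x) \leq C\, V(y)/V(x)$; and the dominating random variable $V(\overline{K}_t)\mathbf{1}_{\{I_t \geq 0\}}/V(x)$ has mean $1$ by the martingale property established in the preceding lemma. The limit is $\mathbb{P}^\uparrow_{(z,x)}(\Lambda)$ by definition \eqref{defPuparrowzx}.

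\medskip

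\noindent\emph{Second assertion.} By linearity and a monotone class argument, the first assertion extends to $\mathbb{E}_{(z,x)}[F \mid I_t \geq 0] \to \mathbb{E}^\uparrow_{(z,x)}[F]$ for every fixed $m \geq 0$ and every bounded $\mathcal{F}_m$-measurable $F$. I then split, for $t \geq m$,
\begin{equation*}
\mathbb{E}_{(z,x)}[H_t \mid I_t \geq 0] - \mathbb{E}^\uparrow_{(z,x)}[H_\infty] \;=\; \mathbb{E}_{(z,x)}[H_t - H_m \mid I_t \geq 0] + \bigl(\mathbb{E}_{(z,x)}[H_m \mid I_t \geq 0] - \mathbb{E}^\uparrow_{(z,x)}[H_m]\bigr) + \bigl(\mathbb{E}^\uparrow_{(z,x)}[H_m] - \mathbb{E}^\uparrow_{(z,x)}[H_\infty]\bigr).
\end{equation*}
For each fixed $m$, the middle term vanishes as $t \to \infty$ by the extended first assertion applied to $F = H_m$, and the last term vanishes as $m \to \infty$ by bounded convergence under $\mathbb{P}^\uparrow_{(z,x)}$ (using $H_t \to H_\infty$ $\mathbb{P}^\uparrow$-almost surely and the uniform bound on $H$).

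\medskip

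\noindent The main obstacle is the first summand $\mathbb{E}_{(z,x)}[|H_t - H_m| \mid I_t \geq 0]$, which depends on $\mathcal{F}_t$ and must be controlled uniformly in $t \geq m$. To handle it I plan to apply the Markov property at time $m$: on $\{I_m \geq 0\}$, the post-$m$ process $(Z_{m+s}, \overline{K}_{m+s} - \overline{K}_m)_{s \geq 0}$ is an independent copy of the CSBP in a L\'evy environment started from $(Z_m, 0)$, and the event $\{I_t \geq 0\}$ factors as $\{I_m \geq 0\}$ intersected with a positivity event for the post-$m$ shifted process above $-\overline{K}_m$. Rerunning the dominated-convergence argument of the first assertion along this decomposition, with \eqref{majP} providing the domination (now uniformly in the random starting value $\overline{K}_m$) and \eqref{grandO} ensuring $\mathbb{E}_{(z,x)}[V(\overline{K}_m)\mathbf{1}_{\{I_m \geq 0\}}] = V(x)<\infty$, I expect to obtain
\begin{equation*}
\limsup_{t \to \infty}\, \mathbb{E}_{(z,x)}[|H_t - H_m| \mid I_t \geq 0] \;\leq\; \mathbb{E}^\uparrow_{(z,x)}[|H_\infty - H_m|],
\end{equation*}
which tends to $0$ as $m \to \infty$ by bounded convergence under $\mathbb{P}^\uparrow_{(z,x)}$. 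Combining the three estimates and sending first $t \to \infty$ and then $m \to \infty$ concludes the proof. The technical heart is that the ``conditioning to stay positive on $[m,t]$'' imposed by $\mathbb{Q}_t$ on the post-$m$ horizon converges, as $t \to \infty$, to the $\mathbb{P}^\uparrow$-conditioning — precisely the Chaumont–Doney-type fact already invoked in the first assertion, now applied at the random level $\overline{K}_m$.
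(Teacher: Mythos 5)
Your first assertion matches the paper's argument: Markov property at time $t$, identification of the pointwise limit via \eqref{limitv}, and domination via \eqref{majP}. The pieces are in the right place, though you should note that what converges pointwise is $g_{s-t}(\overline{K}_t)/g_s(x)\to V(\overline{K}_t)/V(x)$ (not merely $g_{s-t}(x)/g_s(x)\to1$); that is the combination of \eqref{limitv} with the slow variation of $t\mapsto \mathbb{P}^{(e)}_x(I_t>0)$.

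The second assertion is where there is a genuine gap. Your three-term decomposition is fine, and the middle and last terms are handled correctly. The problem is the claimed estimate
\[
\limsup_{t\to\infty}\mathbb{E}_{(z,x)}\bigl[|H_t-H_m|\,\big|\,I_t\ge0\bigr]\;\le\;\mathbb{E}^\uparrow_{(z,x)}\bigl[|H_\infty-H_m|\bigr].
\]
This is precisely the second assertion applied to the bounded adapted process $t\mapsto|H_t-H_m|$, so proving it this way is circular. Your route to it --- Markov at time $m$, then ``rerun the dominated convergence of the first assertion'' --- does not close the gap, because $|H_t-H_m|$ is $\mathcal{F}_t$-measurable, not $\mathcal{F}_m$-measurable. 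After factoring the positivity event at time $m$, the inner quantity $\mathbb{E}[\,|H_t-H_m|\,\mathbf{1}_{\{\inf_{m\le u\le t}\overline{K}_u\ge0\}}\mid\mathcal{F}_m]$ still involves a time-$t$ observable conditioned on a time-$t$ horizon; applying \eqref{majP} to it merely yields a uniform-in-$m$ constant bound (of order $\sup|H|\cdot\widehat{\kappa}(1/(t-m),0)V(\overline{K}_m)$, which after normalisation does not vanish as $m\to\infty$), not the decay you need. The observable and the conditioning horizon are both $t$, and dominated convergence alone cannot decouple them.

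The paper resolves exactly this by conditioning on the strictly larger window $\{I_{\gamma t}>0\}$ with $\gamma\in(1,2]$. Then the Markov property is applied at time $t$ itself: the observable $|H_t-H_s|$ is $\mathcal{F}_t$-measurable while the residual positivity requirement runs over $[t,\gamma t]$, of length $(\gamma-1)t\to\infty$, and \eqref{majP} together with the regular variation of $\widehat{\kappa}$ (via Potter's theorem) turns the whole conditional expectation into a constant multiple of $\mathbb{E}^\uparrow_{(z,x)}[|H_t-H_s|]$, which vanishes by bounded convergence under $\mathbb{P}^\uparrow$. Finally one compares $\{I_t>0\}$ with $\{I_{\gamma t}>0\}$, incurring an error $O(1-\gamma^{\rho-1})$ that is killed by letting $\gamma\downarrow1$. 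Without some analogue of this decoupling device, your first-term estimate remains unproved.
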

 \begin{proof} We proceed similarly as in Proposition 1 in \cite{chaumont2005levy}. 
Let $ h, t\ge 0$ and take  $\Lambda \in \mathcal{F}_t$. Then from the Markov property at time $t$, we obtain
 \begin{equation}\label{exparrow}
 \mathbb{P}_{(z,x)}( \Lambda |I_{t+h}> 0 ) =
 \mathbb{E}_{(z,x)} \left[ \mathbf{1}_\Lambda \frac{\mathbb{P}^{(e)}_{\overline{K}_t}(I_h>  0)}{\mathbb{P}^{(e)}_{x}(I_{t+h}>  0)}\mathbf{1}_{\{I_t > 0\}}\right].
\end{equation}
From inequality \eqref{majP}, we see
 \[
 \frac{\mathbb{P}^{(e)}_{\overline{K}_t}(I_h>  0)}{\mathbb{P}^{(e)}_{x}(I_{t+h}>  0)}\mathbf{1}_{\{I_t > 0\}}\leq  2e \frac{\widehat{\kappa}\Big(h^{-1}, 0\Big)}{\mathbb{P}^{(e)}_x(\tau^-_0> t+h)}  \widehat{V}(\overline{K}_t)\mathbf{1}_{\{I_t > 0\}}.
  \]
On the other hand from  Spitzer's condition, we know  that $\widehat{\kappa}(\cdot, 0)$ is regularly varying at $0+$ with index 
 $1-\rho$ and $ \mathbb{P}^{(e)}_x(\tau^-_0>\cdot)$ is also regularly varying with index $\rho-1$ at $\infty$. Moreover, there is a slowly varying function $\ell(\cdot)$ at $\infty$ such that 
  \begin{equation} \label{asymp_kappa_hat}
 \widehat{\kappa}(q,0) \sim\frac{\Gamma(1+\rho)}{\rho} \ell(1/q)q^{1-\rho}, \quad \text{as} \quad q \to 0, 
 \end{equation}
 and
 \[
  \mathbb{P}^{(e)}_x(\tau^-_0> t)\sim \widehat{V}(x)t^{\rho-1}\ell (t),\quad \text{as} \quad t \to \infty,
\]
 see for instance the proof of Theorem VI.18 in \cite{bertoin1998levy}.    Therefore from  Potter's Theorem (see Theorem 1.5.6 in Bingham et al. \cite{bingham1989regular}) for any 
$C_2>1$ and $\delta>0$ there exists $M$ such that for $h\ge M$,
 \[
 \frac{\mathbb{P}^{(e)}_{\overline{K}_t}(I_h>  0)}{\mathbb{P}^{(e)}_{x}(I_{t+h}>  0)}\mathbf{1}_{\{I_t > 0\}}\leq  C(\rho) \left(1+\frac{t}{M}\right)^{1-\rho+\delta}   \frac{\widehat{V}(\overline{K}_t)}{\widehat{V}(x)}\mathbf{1}_{\{I_t > 0\}},
  \]
with 
\begin{equation}\label{csterho}
C(\rho)=\frac{2e \Gamma(1+\rho)}{\rho} 
  C_2.
\end{equation}
Since $\mathbb{E}_{(z,x)}[\widehat{V}(\overline{K}_t)\mathbf{1}_{\{I_t > 0\}}]=\widehat{V}(x)$ and $\widehat{V}$ is finite,  we may apply Lebesgue's theorem of dominated convergence on the right-hand side of \eqref{exparrow} when $h$ goes to $\infty$.  We conclude from the asymptotic \eqref{limitv} and the definition of \eqref{defPuparrowzx}.
 
 For the second part of our statement, we use  similar arguments as those used in Lemma 2.5 in \cite{Afanasyev2005}.  We let  $s \leq t$ and $\gamma \in (1,2]$ and 
 apply the Markov property at time $t$ and inequality \eqref{majP}, to deduce that
 \begin{eqnarray*}
  \left|\mathbb{E}_{(z,x)}\Big[G_t - G_s \Big|I_{\gamma t}> 0 \Big]\right| & \leq &
 \mathbb{E}_{(z,x)} \left[ \Big|G_t - G_s\Big| \frac{\mathbb{P}^{(e)}_{\overline{K}_t}(I_{(\gamma-1)t}>  0)}{\mathbb{P}^{(e)}_{x}(I_{\gamma t}>  0)}\mathbf{1}_{\{I_t > 0\}}\right]\\
  & \leq & 2e \frac{\widehat{\kappa}\Big(\frac{1}{(\gamma-1)t}, 0\Big)}{\mathbb{P}^{(e)}_x(\tau^-_0>\gamma t)}
  \mathbb{E}_{(z,x)} \Big[ |G_t - G_s|  \widehat{V}(\overline{K}_t)\mathbf{1}_{\{I_t > 0\}}\Big]\\
  & = & 2e \frac{\widehat{\kappa}\Big(\frac{1}{(\gamma-1)t}, 0\Big) \widehat{V}(x)}{\mathbb{P}^{(e)}_x(\tau^-_0>\gamma t)}
  \mathbb{E}_{(z,x)}^\uparrow \Big[|G_t - G_s|\Big].
 \end{eqnarray*}
 Again Potter's Theorem (see Theorem 1.5.6 in Bingham et al. \cite{bingham1989regular}) guarantees that for any
$C_2>1$ and $\delta>0$ there exists $M$ such that for $t\ge M$,
 \[
 \begin{split}
 & \left|\mathbb{E}_{(z,x)}\Big[G_t - G_s \Big|I_{\gamma t}> 0 \Big]\right|\\
 &\hspace{3cm}\le C(\rho)\max\left\{\left( \frac{\gamma}{\gamma-1} \right)^{\delta+{1-\rho}}, \left( \frac{\gamma}{\gamma-1} \right)
  ^{-\delta+1-\rho}\right\}\mathbb{E}_{(z,x)}^\uparrow \Big[|G_t - G_s|\Big],
  \end{split}
  \]
 and $C(\rho)$ is defined in \eqref{csterho}.

 Let $\eps>0$. As $(G_t, t \geq 0)$ is  a uniformly bounded process which converges to  $G_\infty,$ as $t \to \infty$,
 $\mathbb{P}^{\uparrow}_{(z,x)} $-almost surely, there exists $A_\eps >0$ such that for any $A_\eps \leq s \leq t$,
  $$ C(\rho)\max\left\{\left( \frac{\gamma}{\gamma-1} \right)^{\delta+{1-\rho}}, \left( \frac{\gamma}{\gamma-1} \right)
  ^{-\delta+1-\rho}\right\}\mathbb{E}_{(z,x)}^\uparrow \Big[|G_t - G_s|\Big] \leq \eps$$ 
and 
\begin{equation} \label{uti_conv} \left| \mathbb{E}_{(z,x)}^\uparrow [G_s]-\mathbb{E}_{(z,x)}^\uparrow [G_\infty]  \right|\leq \eps. \end{equation}
 Hence for any $s \geq A_\eps$, letting $t$ go to infinity and applying the first statement of this lemma, we get
$$\limsup_{t \to \infty}\left| \frac{\mathbb{E}_{(z,x)}\left[G_t \mathbf{1}_{\{I_{\gamma t}>0 \}}\right]}{\mathbb{P}^{(e)}_{x}(I_{\gamma t}> 0)}-\mathbb{E}_{(z,x)}^\uparrow [G_s] \right|\leq \eps.$$
Adding \eqref{uti_conv}, we get 
 $$\mathbb{E}_{(z,x)}\left[G_t \mathbf{1}_{\{I_{\gamma t}>0 \}} \right]=\Big(\mathbb{E}_{(z,x)}^\uparrow[G_\infty]+o(1)\Big)\mathbb{P}^{(e)}_{x}(I_{\gamma t}> 0). $$
 Thus, there exists a   constant $C_3>0$ such that
 \[
 \begin{split}
 \Big|\mathbb{E}_{(z,x)}[G_t\mathbf{1}_{\{I_t > 0\}}]- \mathbb{E}_{(z,x)}^\uparrow[G_\infty]\mathbb{P}^{(e)}_{x}(I_{ t}> 0)\Big|&\le 
 C_3 \mathbb{P}^{(e)}_{x}(I_t > 0, I_{\gamma t} \le 0)\\
 &+ \Big|\mathbb{E}_{(z,x)}[G_t\mathbf{1}_{\{I_{\gamma t} >0\}}]- \mathbb{E}_{(z,x)}^\uparrow[G_\infty]\mathbb{P}^{(e)}_{x}(I_{\gamma t}> 0)\Big|\\
  &\leq \Big(o(1)+c(1-\gamma^{\rho -1})\Big)\mathbb{P}^{(e)}_{x}(I_t > 0),
 \end{split}
 \]
where we applied  the asymptotic in \eqref{infimumspitzer} for the second inequality. Note that since $\ell$ in  \eqref{infimumspitzer} is slowly varying and $\gamma \in (1,2]$, we can choose
$c$ independent from $\gamma$ (see again Theorem 1.5.6 in Bingham et al. \cite{bingham1989regular}).
 Since the choice of $\gamma$ on $(1,2]$ was arbitrary, we finally obtain
 $$\mathbb{E}_{(z,x)}[G_t\mathbf{1}_{\{I_t > 0\}}]- \mathbb{E}_{(z,x)}^\uparrow[G_\infty]\mathbb{P}^{(e)}_{x}(I_t> 0)= o(\mathbb{P}^{(e)}_{x}(I_t > 0)) ,$$
 which completes the proof.
 \end{proof}

 \subsection{Non-absorption}
In this section,  we are interested in the event of survival of the process $Z$  under the conditioned environment. To estimate the latter, we first compute the probability of the event of extinction at a given time, under the conditioned environment, and then we will observe that such a probability is strictly positive if and only if  Grey's condition \eqref{GreysCond} is fullfilled. It is important to note that the latter statement can be deduced directly from Theorem 4.1 in \cite{he2016continuous} but actually, in this case its proof is rather simple and for completeness we decide to include it.

Recall from   Proposition 2 in \cite{PP1} (or after the comments of Theorem 1), that  there exists a functional $v_t(s,\lambda, \overline{K})$ which is  the   $\mathbb{P}^{(e)}$-a.s. unique solution of  the backward differential equation given in \eqref{backward} and satisfies
\begin{equation}\label{defvtslambdaK}
\begin{split}
\mathbb{E}_{(z, 0)}\Big[\exp\Big\{-\lambda e^{-x} Z_t e^{-\overline{K}_t}\Big\}\Big|\mathcal{F}^{(e)}_t\Big]&=\exp\Big\{-zv_t(0,\lambda e^{-x},\overline{K})\Big\}.
\end{split}
\end{equation}
 A similar identity holds for  CSBPs in a L\'evy environment  conditioned to stay positive as we see below.
\begin{proposition}\label{posi_CSBP1} For $x,z> 0$ and $\lambda\ge 0$, we have
\[
\E_{(z,x)}^\uparrow \left[ e^{-\lambda Z_t e^{-\overline{K}_t}} \right] =\mathbb{E}^{(e), \uparrow}_x  \left[ e^{- zv_t(0,\lambda e^{-K_0},\overline{K}-K_0) } \right],
\]
In particular, 
$$ \P^\uparrow_{(z,x)} (Z_t=0) = \mathbb{E}^{(e), \uparrow}_x \left[ e^{-z v_t(0,\infty,\overline{K}-K_0) } \right], \qquad \textrm{for}\quad t> 0,$$
which is strictly positive if and only if  Grey's condition \eqref{GreysCond} is satisfied. 
\end{proposition}
\begin{proof} Let  $x,z> 0$. From the definition of  CSBPs in a L\'evy environment  conditioned to stay positive \eqref{defPuparrowzx}, we deduce that for every non-negative $\lambda$,
 \[
 \begin{split} \E_{(z,x)}^\uparrow \left[ e^{-\lambda Z_t e^{-\overline{K}_t}} \right] 
 &=\frac{1}{ \widehat{V}(x)}\E_{(z,x)} \left[ \widehat{V}(\overline{K}_t)e^{-\lambda Z_t e^{-\overline{K}_t}} \mathbf{1}_{\{I_t >  0\}}\right] 
\\ 
&=\frac{1}{ \widehat{V}(x)}\E_{(z,0)} \left[  \widehat{V}(\overline{K}_t+x)e^{-\lambda e^{-x} Z_t e^{-\overline{K}_t}} \mathbf{1}_{\{I_t > -x\}}\right] \\
&=\frac{1}{ \widehat{V}(x)}\E_{(z,0)} \left[  \widehat{V}(\overline{K}_t+x)\mathbf{1}_{\{I_t > -x\}}\E_{(z,0)}\Big[e^{-\lambda e^{-x} Z_t e^{-\overline{K}_t}} \Big| \mathcal{F}^{(e)}_t\Big]\right] \\
&=\frac{1}{ \widehat{V}(x)} \E_{(z,0)}  \left[  \widehat{V}(\overline{K}_t+x)\mathbf{1}_{\{I_t > -x\}}e^{- v_t(0,\lambda e^{-x},\overline{K})} \right]\\
&=\frac{1}{ \widehat{V}(x)} \E_{(z,x)}  \left[  \widehat{V}(\overline{K}_t)\mathbf{1}_{\{I_t > 0\}}e^{- v_t(0,\lambda e^{-K_0},\overline{K}-K_0)} \right]\\
 &=  \mathbb{E}^{(e), \uparrow}_x \left[ e^{- zv_t(0,\lambda e^{-K_0},\overline{K}-K_0) } \right].
 \end{split}
 \]
By letting $\lambda$ go to infinity, we get
 $$ \P^\uparrow_{(z,x)} (Z_t=0) =  \mathbb{E}^{(e), \uparrow}_x\left[ e^{-z v_t(0,\infty,\overline{K}-K_0) } \right]. $$
From the previous identity, it is clear that 
\[
0<\P^\uparrow_{(z,x)} (Z_t=0) \qquad \textrm{if and only if}  \qquad \mathbb{P}^{(e), \uparrow}_x\left( v_t(0,\infty,\overline{K}-K_0)<\infty\right)>0.
\]
Therefore, in order to deduce the last statement it is enough to show that Grey's condition \eqref{GreysCond} is necessary and sufficient for $\mathbb{P}^{(e), \uparrow}_x(v_t(0,\infty,\overline{K}-K_0)<\infty)>0$.

We   first observe from the Wiener-Hopf factorisation \eqref{WHfactors} applied to the spectrally positive L\'evy process associated to the branching mechanism  $\psi_0$,  that there exists a  non decreasing function $\Phi$ (which is associated to its ascending ladder height) satisfying,
\begin{equation}\label{WHbranching} \psi_0(\lambda)=\lambda \Phi(\lambda)\qquad \textrm{for} \quad \lambda\ge 0. 
\end{equation}
More precisely, from \eqref{defpsi},  \eqref{psi0} and integration by parts, we have
\begin{equation}\label{def_Phi}
\begin{split}
  \Phi(\lambda)= \gamma^2 \lambda +\int_{(0,\infty)}\frac{ e^{-\lambda x}-1 + \lambda x  }{\lambda} \mu(\ud x)\\
  =\gamma^2 \lambda +\int_{(0,\infty)} \Big(1-e^{-\lambda x}\Big) \overline{\mu}( x)\ud x,
  \end{split}
\end{equation}
where $\overline{\mu}( x):=\mu((x, \infty))$. 
Since $\Phi$ is the Laplace exponent of a subordinator, it is well-known that for any $\lambda>0$ and $k>1$, we have $\Phi(\lambda)\le k\Phi(\lambda/k)$ (see for instance the proof of Proposition III.1 in \cite{bertoin1998levy}). In particular, from \eqref{backward} and under the event that $\{t< \tau_{-x}^-\}$, we have
\begin{align*}
 \frac{\partial}{\partial s} v_t(s,\lambda e^{-x},\overline{K}-x)&=e^{\overline{K}_s-x}\psi_0(e^{-\overline{K}_s+x}v_t(s,\lambda e^{-x},\overline{K}-x)) \\
 &= v_t(s,\lambda e^{-x},\overline{K}-x)\Phi(e^{-\overline{K}_s+x}v_t(s,\lambda e^{-x},\overline{K}-x))\\
 &\geq  e^{-\overline{K}_s+x}\psi_0(v_t(s,\lambda e^{-x},\overline{K}-x)).
\end{align*}
 This
entails
$$  \int_{v_t(0,\lambda e^{-x},\overline{K}-x)}^{\lambda e^{-x}}\frac{\ud u}{\psi_0(u)} \geq \int_0^t e^{-\overline{K}_s +x} \ud s. 
$$
Assuming that  \eqref{GreysCond} holds, we deduce
\begin{equation}\label{tomate}  \int_{v_t(0,\infty,\overline{K}-x)}^{\infty}\frac{\ud u}{\psi_0(u)} \geq \int_0^t e^{-\overline{K}_s+x} \ud s,
\end{equation}
which clearly implies that $v_t(0,\infty,\overline{K}-K_0)<\infty$ with positive probability under  $\mathbb{P}^{(e), \uparrow}_x$.

Next, we assume that $\mathbb{P}^{(e), \uparrow}_x(v_t(0,\infty,\overline{K}-K_0)<\infty)>0$. Since $\psi_0$ is non-decreasing, we deduce, under the event that $\{t< \tau_{-x}^-\}$, that 
\[
\begin{split}
 \frac{\partial}{\partial s} v_t(s,\lambda e^{-x},\overline{K}-x)&=e^{\overline{K}_s-x}\psi_0(e^{-\overline{K}_s+x}v_t(s,\lambda e^{-x},\overline{K}-x))\\
 & \leq e^{\overline{K}_s-x}\psi_0(v_t(s,\lambda e^{-x},\overline{K}-x)),
\end{split}
\]
which implies 
\[
\int_{v_t(0,e^{-x},\overline{K}-x)}^{\lambda e^{-x}}\frac{\ud u}{\psi_0(u)} \leq \int_0^t e^{\overline{K}_s-x} \ud s.\]
Therefore, by letting $\lambda$ goes to $\infty$, we have 
\[
\int_{v_t(0,\infty,\overline{K}-x)}^{\infty}\frac{\ud u}{\psi_0(u)} \leq \int_0^t e^{\overline{K}_s-x} \ud s,
\]
with positive probability under  $\mathbb{P}^{(e), \uparrow}_x$.
It  implies that Grey's condition \eqref{GreysCond} holds and completes the proof.
\end{proof}

 Actually, from Grey's condition \eqref{GreysCond} and inequality \eqref{tomate}, we can deduce a nice lower bound for the probability of extinction. Indeed, let us introduce
\[
f(t):=\int_t^\infty \frac{\ud u}{\psi_0(u)}, \qquad \textrm{for}\quad t>0,
\]
and note that  the function $f:(0,\infty)\to(0,\infty)$ is a decreasing bijection and thus its inverse exists. We denote this inverse by  $\varphi$. Therefore from \eqref{tomate}, we get
\[
v_t(0,\infty, \overline{K}-x)\le \varphi\left(\int_0^t e^{-\overline{K}_s+x} \ud s\right).
\]
In other words,
\[
\mathbb{E}^{(e), \uparrow}_x \left[ \exp\left\{-z \varphi\left(\int_0^t e^{-\overline{K}}_s\right)\ud s \right\} \right]\le\mathbb{E}^{(e), \uparrow}_x\left[ e^{-z v_t(0,\infty,\overline{K}-K_0) } \right],
\]
implying 
\[
0<\mathbb{E}^{(e), \uparrow}_x \left[ \exp\left\{-z \varphi\left(\int_0^\infty e^{-\overline{K}_s}\right)\ud s \right\} \right]\le\lim_{t\to\infty}\P^\uparrow_{(z,x)} (Z_t=0),
\] 
since $\varphi$ is non-increasing and 
\begin{equation}\label{finitexp}
\int_0^\infty e^{-\overline{K}_s}\ud s <\infty, \qquad \mathbb{P}^{(e), \uparrow}_x\mathrm{-a.s.}
\end{equation}
 The claim in \eqref{finitexp} follows from the following argument. From  Theorem VI.20 in Bertoin \cite{bertoin1998levy}, we observe
 \[
 \begin{split}
\mathbb{E}^{(e), \uparrow}_x \left[ \int_0^\infty  e^{-\overline{K}_s} \ud s\right]&= 
 \int_0^\infty \mathbb{E}^{(e), \uparrow}_x \left[   e^{-\overline{K}_s} \right]\ud s\\
 &= \int_0^\infty  \frac{1}{ \widehat{V}(x)}\mathbb{E}^{(e)}_x\left[  e^{-\overline{K}_s}  \widehat{V}(\overline{K}_s)\mathbf{1}_{\{I_s >  0\}}\right]\ud s\\
 &= \frac{1}{ \widehat{V}(x)}\mathbb{E}^{(e)}_x\left[ \int_0^{\tau^-_0}  e^{-\overline{K}_s}  \widehat{V}(\overline{K}_s)\ud s\right]\\
 &= \frac{\mathbf{c}}{ \widehat{V}(x)}\int_{[0,\infty)} V(\ud y) \int_{[0,x]} \widehat{V}(\ud z) e^{-x-y+z} \widehat{V}(x+y-z),\\
 \end{split}
 \]
 where we recall that $ V$ denotes the renewal measure associated to the ascending ladder height and $\mathbf{c}$ is a constant that only depends on the 
 normalisation of the local times $L$ and $\widehat{L}$. For the sake of simplicity we take $\mathbf{c}=1$. Thus, since  $\widehat{V}$ is increasing we have 
  \[
\mathbb{E}^{(e), \uparrow}_x \left[ \int_0^\infty  e^{-\overline{K}_s} \ud s\right]\le \int_{[0,\infty)}V(\ud y) e^{-y} \widehat{V}(x+y).
 \]
The latter integral is  finite since  $\widehat{V}$ satisfies \eqref{grandO} and 
  \begin{equation}\label{Lapsharp}
 \int_{[0, \infty)}e^{-\theta x} V(\ud x)=\frac{1}{\kappa(0,\theta)}, \qquad \textrm{for} \quad \theta>0,
 \end{equation}
 which follows from the definition of $ V$ (see \eqref{ascren}) and similar arguments as in \eqref{LaplaceV}. In other words the claim in \eqref{finitexp} holds.

On the other hand, for our purposes, we are interested in conditions which guarantee that 
\[
\lim_{t\to\infty}\P^\uparrow_{(z,x)} (Z_t=0)<1.
\]
This problem is similar to determining when the probability of survival of a CSBP process  in L\'evy environments that drifts to $+\infty$, is positive. According to Proposition 2 in \cite{palau2017continuous}, the latter holds under a $x\log (x)$  moment condition on the measure $\mu$.

Assumption {(H2)} is very similar to the previous condition and implies that $Z$
has a positive probability to survive when living in a ``favorable" environment, or in other words when the 
running infimum of the L\'evy environment is positive.
\begin{proposition}\label{posi_CSBP}
If  condition {(\bf H2)} holds then  
 $$
 \lim_{t\to\infty}\P^\uparrow_{(z,x)} (Z_t>0) >0.
 $$
\end{proposition}
\begin{proof} Let us assume that condition {\bf (H2)} holds. We follow similar ideas as in the proof of Proposition 2 in \cite{palau2017continuous}.
First recall that the function $s \mapsto v_t(s,\lambda  e^{-x},\overline{K}-x)$ is non-decreasing on $[0,t]$ since $\psi_0$ is positive. Hence for any $s\in[0,t]$, we have 
$$ v_t(s,\lambda e^{-x},\overline{K} -x) \leq \lambda  e^{-x}. $$
In particular, from \eqref{backward} we have
\begin{align*}
 \frac{\partial}{\partial s} v_t(s,\lambda e^{-x},\overline{K} -x)&=e^{\overline{K}_s-x}\psi_0(e^{-\overline{K}_s+x}v_t(s,\lambda  e^{-x},\overline{K} -x)) \\
 &= v_t(s,\lambda e^{-x},\overline{K}-x)\Phi(e^{-\overline{K}_s+x}v_t(s,\lambda e^{-x},\overline{K} -x))
\\
& \leq v_t(s,\lambda e^{-x},\overline{K} -x) \Phi(e^{-\overline{K}_s}\lambda),
\end{align*}
as $\Phi$ is non-decreasing and $v_t(s,\lambda e^{-x},\overline{K} -x)$ is non-decreasing with $s$ and equals $\lambda e^{-x}$ when $s=t$. This
entails
$$  v_t(0,\lambda e^{-x},\overline{K} -x) \geq \lambda  e^{-x} \exp \left\{ - \int_0^t \Phi\left( \lambda e^{-\overline{K}_s} \right)\ud s \right\}. $$
Thus, for any $\lambda \ge 0$ 
\[
\begin{split}
\P^\uparrow_{(z,x)} (Z_t=0)&=\mathbb{E}^{(e), \uparrow}_x \left[ e^{-z v_t(0,\infty,\overline{K} -x) } \right]\\
&=\frac{1}{ \widehat{V}(x)}\mathbb{E}^{(e)}_x\left[  \widehat{V}(\overline{K}_t)e^{-z v_t(0,\infty,\overline{K} -x) }\mathbf{1}_{\{t<   \tau^-_0\}} \right]\\
&\leq \frac{1}{\widehat{V}(x)}\mathbb{E}^{(e)}_x\left[  \widehat{V}(\overline{K}_t)e^{-z v_t(0,\lambda e^{-x},\overline{K} -x) }\mathbf{1}_{\{t<  \tau^-_0\}} \right]\\
&  \leq  \frac{1}{ \widehat{V}(x)}\mathbb{E}^{(e)}_x \left[  \widehat{V}(\overline{K}_t) e^{-z \lambda  e^{-x} \exp \left\{ - \int_0^t \Phi\left( \lambda e^{-\overline{K}_s} \right)\ud s \right\} }\mathbf{1}_{\{t<  \tau^-_0\}} \right]\\
&=\mathbb{E}^{(e), \uparrow}_x  \left[ e^{-z  \lambda  e^{-x}\exp \left\{ - \int_0^t \Phi\left( \lambda e^{-\overline{K}_s} \right)\ud s \right\} } \right],
\end{split}
\]
where we have used that $\lambda \mapsto v_t(0,\lambda ,\overline{K}-x)$ is non-decreasing, see Proposition 2.2 in He et al. \cite{he2016continuous}.
Hence, we have
\[
\lim_{t\to\infty}\P^\uparrow_{(z,x)} (Z_t=0)\le \mathbb{E}^{(e), \uparrow}_x  \left[ e^{-z  \lambda  e^{-x}\exp \left\{ - \int_0^\infty \Phi\left( \lambda e^{-\overline{K}_s} \right)\ud s \right\} } \right].
\]
If
\begin{equation}\label{finitude} \mathbb{E}^{(e), \uparrow}_x \left[ \int_0^\infty \Phi\left( \lambda e^{-\overline{K}_s} \right)\ud s\right]<\infty ,\end{equation}
we get
 $$  \exp \left\{ - \int_0^{\infty} \Phi\left( \lambda e^{-\overline{K}_s} \right)\ud s \right\} >0 , \quad \mathbb{P}^{(e), \uparrow}_x -\textrm{a.s.,} $$
 and $\lim_{t\rightarrow \infty} \P^\uparrow_{(z,x)} (Z_t=0)<1. $
In other words, in order to deduce our result it is enough to show that  \eqref{finitude} holds.  We proceed similarly as in  the proof of \eqref{finitexp}.  From  the definition of $\mathbb{P}^{(e), \uparrow}_x$ and Theorem VI.20 in Bertoin \cite{bertoin1998levy}, we observe
 \[
 \begin{split}
\mathbb{E}^{(e), \uparrow}_x \left[ \int_0^\infty \Phi\left( \lambda e^{-\overline{K}_s} \right)\ud s\right]&=  \frac{1}{ \widehat{V}(x)}\mathbb{E}^{(e)}_x\left[ \int_0^{\tau^-_0} \Phi\left( \lambda e^{-\overline{K}_s} \right) \widehat{V}(\overline{K}_s)\ud s\right]\\
 &= \frac{1}{ \widehat{V}(x)}\int_{[0,\infty)} V(\ud y) \int_{[0,x]}  \widehat{V}(\ud z)\Phi\left(\lambda e^{-x-y+z}\right) \widehat{V}(x+y-z).\\
 \end{split}
 \] 
Recalling the definition of  $\Phi$ in \eqref{def_Phi} and observing that it is increasing, as well as the renewal 
function $\widehat{V}$, we obtain that
 \[
 \begin{split}
 \frac{1}{\widehat{V}(x)}\int_{[0,\infty)} V(\ud y) \int_{[0,x]} & \widehat{V}(\ud z)\Phi\left(\lambda e^{-x-y+z}\right) \widehat{V}(x+y-z)\\
 &\le \int_{[0,\infty)}V(\ud y) \Phi\left(\lambda e^{-y}\right) \widehat{V}(x+y) \\
 &= \gamma^2\lambda \int_{[0,\infty)} V(\ud y)  e^{-y} \widehat{V}(x+y)\\
 &\hspace{.5cm}+\int_{[0,\infty)} V(\ud y)  \widehat{V}(x+y)\int_{(0,\infty)} \Big(1-e^{-\lambda e^{-y}z}\Big) \overline{ \mu} (z) \ud z.
 \end{split}
 \]
 The first integral of the right-hand side is finite from identity \eqref{Lapsharp} and 
 since $\widehat{V}$ satisfies \eqref{grandO}.  For the second  integral, we first rewrite
 \[
 \begin{split}
 \int_{[0,\infty)}V(\ud y) & \widehat{V}(x+y)\int_{(0,\infty)} \Big(1-e^{-\lambda e^{-y}z}\Big)\overline{ \mu} (z) \ud z\\
 &=\int_{(0,\infty)}\ud z \overline{\mu}( z)\int_{[0,\infty)} V(\ud y)  \left(1-e^{-\lambda e^{-y}z} \right) \widehat{V}(x+y)\\
 &=\int_{(0,\infty)}\ud z \overline{\mu}( z)g(z),
 \end{split}
 \]
with 
\begin{equation}\label{defg}
g(z):=\int_{[0,\infty)} V(\ud y) \left(1-e^{-\lambda e^{-y}z} \right) \widehat{V}(x+y).
 \end{equation}
In order to conclude our proof, we need to show that under  condition {\bf (H2)}, the integral of $z\mapsto  \overline{\mu}(z)g(z)$  is finite.
In other words, we need to study the behaviour of $g(z)$ when $z$ is close to $0$ and to $\infty$. With this aim in mind, we  use that $\widehat{V}$ is subadditive and  identity \eqref{Lapsharp}, as well as the following inequality, 
 $$ 1 -e^{-z} \leq 1 \wedge z, $$
 which holds for every $z>0$.
 For  $z$  small enough, and using inequality \eqref{grandO}, we get
 \[
 \begin{split}
  g(z)&\leq \lambda z\int_{[0,1)} V(\ud y)  \widehat{V}(x+y)  e^{-y}+\lambda z\int_{[1,\infty)} V(\ud y) \widehat{V}(x+y)  e^{-y}\\
  &\leq \lambda z \left(   \widehat{V}(x+1)  V(1)+ C(x)\int_{[1,\infty)} V(\ud y) y e^{-y} \right)\\
  &\leq C_1(x)\lambda z, \end{split}
 \]
where $C(x)$ and $C_1(x)$ are two  finite constants that only depend on $x$.

For $z$ large  enough, we split the integral in \eqref{defg} into three terms. To be more precise, 
\[
 \begin{split}  g(z) &\leq \int_{[0,\infty)} V(\ud y)  \widehat{V}(x+y) (\lambda z e^{-y} \wedge 1)\\ 
 & \leq \int_{[0,1)}  V(\ud y)\widehat{V}(x+y)+\int_{[1,2\ln(\lambda z))} V(\ud y)  \widehat{V}(x+y)\\ 
 &\hspace{5cm}+\lambda z\int_{[2\ln(\lambda z), \infty)}  V(\ud y) \widehat{V}(x+y) e^{-y}.
 \end{split}
 \]
 We study  the three terms from above separately.  First, it is clear that the first term satisfies
 $$ \int_{[0,1)} V(\ud y)  \widehat{V}(x+y)  \leq V(1)  \widehat{V}(x+1).  $$
For the third term, we use \eqref{grandO} 
and deduce
\[
 \begin{split}
  \lambda z  \int_{2\ln(\lambda z)}^\infty  V(\ud y)  \widehat{V}(x+y) e^{-y}
  &\leq C_1 \lambda z e^{-2\ln(\lambda z)/2}\int_{2\ln(\lambda z)}^\infty  V(\ud y) (x+y) e^{-y/2} \\ 
  &\leq C_1  (x+1)\int_{0}^\infty  V(\ud y) (1+y) e^{-y/2}
  \end{split}
  \]
  where 
  $$\int_{0}^\infty  V(\ud y) (1+y) e^{-y/2}\leq \sum_{i\geq 0}   V ([i,i+1)) (1+i+1)e^{-i/2}\leq C_4\sum_{i\geq 0} (i+2)^2e^{-i/2}<\infty,$$
with $C_4>0$ such that 
  \[
 V(x)\le C_4 x, \qquad\textrm{for}\quad x\ge 0.
  \]
Finally, 
 \[
 \int_{[1,2\ln(\lambda z))}  V(\ud y)  \widehat{V}(x+y) \leq C(x) \int_{[1,2\ln(\lambda z))}yV(\ud y) \leq C_2(x) \ln^2(\lambda z),
 \]
 where $C(x)$ and $C_2(x)$ are two finite constants that only depend on $x$.
 Since condition ${\bf (H2)}$ holds,  the proof of our result is now complete.
 \end{proof}

 \section{Proof of Theorem \ref{maintheo}}

We have now collected all the necessary results to study the asymptotic behaviour of the extinction probability of $Z$. The proof of Theorem \ref{maintheo} follows from studying the event of survival at time $t$,  $\{Z_t>0\}$ in three different situations that depend on the behaviour 
of the infimum of the environment. To be more precise, we split the survival event as follows: for $z,x>0$, 
\begin{multline}\label{decomp} \P_{(z,x)}(Z_t>0) = \P_{(z,x)}(Z_t>0,I_t>0) \\
+ \P_{(z,x)}(Z_t>0,-y<I_t\leq 0)+ \P_{(z,x)}(Z_t>0,I_t\leq -y), \end{multline}
where $y>0$ will be chosen later on.  In other words, to deduce our result, we study such events separately for  $t$ sufficiently large. 
 
Our first result in this section concerns the first term in the right hand side of \eqref{decomp}. It says that this is   the leading term in \eqref{decomp}.

\begin{lemma}\label{L1preuve} Assume that assumptions {\bf (H1)} and {\bf (H2)} hold. 
For $z,x>0$,  there exists a positive constant $c(z,x)$ such that
$$ \P_{(z,x)}(Z_t>0,I_t>0) \sim c(z,x) \mathbb{P}^{(e)}_x(I_t>0) \sim c(z,x)  \widehat{V}(x)t^{-(1-\rho)}\ell(t), \quad \text{as} \quad t \to \infty, $$
where  $\ell$ is a slowly varying function at $\infty$, introduced in \eqref{infimumspitzer}.
\end{lemma}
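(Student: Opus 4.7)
The proof combines the conditioning machinery of Section~\ref{sectionLevyCondi} with the non-extinction result of Proposition~\ref{posi_CSBP}. The second equivalence is nothing but the asymptotic \eqref{infimumspitzer}, so the core task is to establish
$$\P_{(z,x)}(Z_t>0,\, I_t>0) \sim c(z,x)\,\mathbb{P}^{(e)}_x(I_t>0), \qquad t\to\infty,$$
for some strictly positive constant $c(z,x)$. Since $\overline{K}_0 = x > 0$, we have $\{I_t > 0\} = \{\overline{K}_u > 0,\, 0 \le u \le t\}$, which lets us rewrite
$$\P_{(z,x)}(Z_t>0,\, I_t>0) = \P_{(z,x)}\bigl(Z_t>0 \,\big|\, \overline{K}_u > 0,\, 0 \le u \le t\bigr) \cdot \mathbb{P}^{(e)}_x(I_t>0),$$
reducing the task to identifying the limit of the conditional probability above.

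The plan is to apply the second part of Lemma~\ref{lemme25AGKV} with $H_t := \mathbf{1}_{\{Z_t>0\}}$. This process is $\mathcal{F}_t$-adapted and uniformly bounded by $1$. Moreover, since $0$ is absorbing for $Z$, the family $(\mathbf{1}_{\{Z_t>0\}})_{t\ge 0}$ is pointwise nonincreasing in $t$, so it converges everywhere (in particular $\mathbb{P}^{\uparrow}_{(z,x)}$-almost surely) to $H_\infty := \mathbf{1}_{\{Z_s > 0 \text{ for all } s \ge 0\}}$. Lemma~\ref{lemme25AGKV} then yields
$$\lim_{t\to\infty}\P_{(z,x)}\bigl(Z_t>0 \,\big|\, \overline{K}_u > 0,\, 0 \le u \le t\bigr) = \mathbb{P}^{\uparrow}_{(z,x)}\bigl(Z_s > 0 \text{ for all } s \ge 0\bigr) =: c(z,x).$$

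It remains only to show $c(z,x) > 0$. By monotone convergence,
$$c(z,x) = \lim_{t\to\infty} \mathbb{P}^{\uparrow}_{(z,x)}(Z_t > 0),$$
and Proposition~\ref{posi_CSBP}, which is precisely where assumption \textbf{(H2)} is invoked, guarantees that this limit is strictly positive. Combining these observations with \eqref{infimumspitzer} yields both equivalences. Essentially all of the analytical difficulty has been absorbed into Lemma~\ref{lemme25AGKV} (control of the conditioning under Spitzer's condition) and Proposition~\ref{posi_CSBP} (non-degeneracy of the $h$-transformed process under \textbf{(H2)}); the only remaining check is the monotone $\mathbb{P}^{\uparrow}_{(z,x)}$-a.s.\ convergence of $H_t$, which is immediate.
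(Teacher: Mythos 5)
Your proof is correct and follows exactly the paper's own argument: apply the second part of Lemma~\ref{lemme25AGKV} to $H_t=\mathbf{1}_{\{Z_t>0\}}$, identify the limit as $\mathbb{P}^{\uparrow}_{(z,x)}(\forall s\ge 0,\, Z_s>0)$, invoke Proposition~\ref{posi_CSBP} under \textbf{(H2)} for positivity of that constant, and finish with \eqref{infimumspitzer}. Your explicit remark that $\{Z_t>0\}$ is nonincreasing (by absorption of $0$) is a useful justification of the $\mathbb{P}^{\uparrow}_{(z,x)}$-a.s.\ convergence that the paper leaves implicit.
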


\begin{proof} 
Since $\mathbf{1}_{\{Z_t>0\}}$ converges to $\mathbf{1}_{\{\forall s\ge 0,\,  Z_s>0\}}$,  $\P^\uparrow_{(z,x)}$-almost surely, as $t$ goes to $\infty$, we can apply Lemma \ref{lemme25AGKV} and
\[
\begin{split}
 \P_{(z,x)}(Z_t>0,I_t>0)&= \mathbb{P}_{(z,x)}(Z_t>0|I_t>0)\mathbb{P}^{(e)}_x(I_t>0) \\
 &\sim \P^\uparrow_{(z,x)}(\forall s\ge 0,\, Z_s>0)\mathbb{P}^{(e)}_x(I_t>0),\qquad \textrm{as}\quad t \to \infty.
\end{split}
\]
From Proposition \ref{posi_CSBP}, we know that 
$$ \P^\uparrow_{(z,x)}(\forall s\ge 0,\, Z_s>0)>0. $$
We conclude the proof by recalling the asymptotic behaviour in  \eqref{infimumspitzer} to deduce the second equivalence.
\end{proof}

We  now prove that the last term in the right hand side of \eqref{decomp} is negligible for $y$ large enough, under assumptions {\bf (H1)} and {\bf (H3)}.

\begin{lemma}\label{L2preuve}
Let $\eps, z,x>0$, $\delta\in(0,1)$ and assume that assumptions {\bf (H1)} and {\bf (H3)} hold.
 Then for $t$ and $y$ large enough, we have
$$ \P_{(z,x)}(Z_t>0,I_{t-\delta}<-y) \leq \eps \P_{(z,x+y)}(Z_t>0,I_{ t-\delta}>0). $$
\end{lemma}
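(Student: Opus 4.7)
Let $\sigma := \inf\{s \geq 0 : \overline{K}_s \leq -y\}$. On $\{I_t < -y\}$ we have $\sigma \leq t$, and conditioning on the environment together with the quenched extinction formula appearing in the proof of Proposition~\ref{posi_CSBP} yields
\[
\mathbb{P}_{(z,x)}(Z_t > 0, I_t < -y) = \mathbb{E}^{(e)}_x\!\left[\mathbf{1}_{\{\sigma \leq t\}}\bigl(1 - e^{-z v_t(0, \infty, \overline{K})}\bigr)\right].
\]
From \textbf{(H4)} and the backward ODE \eqref{backward}, the differential inequality $\partial_s v_s^{-\beta} \leq -\beta C e^{-\beta \overline{K}_s}$ integrates to $v_t(0,\infty,\overline{K}) \leq (\beta C \int_0^t e^{-\beta \overline{K}_u}\ud u)^{-1/\beta}$. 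Splitting the integral at $\sigma$ and using $\overline{K}_\sigma \leq -y$, on $\{\sigma \leq t\}$ I obtain
\[
\int_0^t e^{-\beta \overline{K}_u}\ud u \geq e^{\beta y}\, J_{t-\sigma}, \qquad J_s := \int_0^s e^{-\beta(\overline{K}_{\sigma+u}-\overline{K}_\sigma)}\ud u,
\]
and by the strong Markov property of $\overline{K}$, $J_s$ is (conditionally on $\mathcal{F}^{(e)}_\sigma$) distributed as $\int_0^s e^{-\beta \overline{K}_u}\ud u$ under $\mathbb{P}^{(e)}_0$.

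The naive estimate $1 - e^{-r} \leq r$ gives an integrand $J_{t-\sigma}^{-1/\beta}$ that is singular in a non-integrable way near $\sigma = t$ when $\beta \leq 1$; I instead use the sharper bound $1 - e^{-r} \leq r^\alpha$ (valid for $r \geq 0$ and any $\alpha \in (0,1]$) with $\alpha$ chosen in a suitable sub-interval of $(0,\beta)$. This gives
\[
\mathbb{P}_{(z,x)}(Z_t > 0, I_t < -y) \leq z^\alpha e^{-\alpha y}(\beta C)^{-\alpha/\beta}\, \mathbb{E}^{(e)}_x\!\left[\mathbf{1}_{\{\sigma \leq t\}}\, \phi_\alpha(t - \sigma)\right],
\]
where $\phi_\alpha(s) := \mathbb{E}^{(e)}_0\bigl[(\int_0^s e^{-\beta \overline{K}_u}\ud u)^{-\alpha/\beta}\bigr]$ is now integrable near $s=0$ because $\alpha/\beta < 1$.

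The lower bound $J_s \geq s e^{-\beta \sup_{u \leq s}\overline{K}_u}$, combined with Doob's $L^{\theta^+}$-maximal inequality applied to the exponential martingale $\exp(\overline{K}_t - t\log \mathbb{E}^{(e)}[e^{\overline{K}_1}])$ (whose use crucially relies on \textbf{(H3)}), yields a polynomial upper bound on $\phi_\alpha$. Combining this with the Spitzer asymptotic $\mathbb{P}^{(e)}_x(\sigma > s) = \mathbb{P}^{(e)}_{x+y}(I_s > 0) \sim V(x+y)s^{\rho-1}\ell(s)$ from \eqref{infimumspitzer} and splitting the convolution $\int_0^t \phi_\alpha(t-s)\,\mathbb{P}^{(e)}_x(\sigma \in \ud s)$ at appropriate scales shows, for $\alpha$ chosen in the right range, that
\[
\mathbb{E}^{(e)}_x\!\left[\mathbf{1}_{\{\sigma \leq t\}}\, \phi_\alpha(t - \sigma)\right] \leq C_1 V(x+y)\, t^{\rho-1}\ell(t)
\]
for $t$ large enough, with $C_1$ independent of $y$.

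Finally, Lemma~\ref{L1preuve} gives $\mathbb{P}_{(z,x+y)}(Z_t>0,I_t>0) \sim c(z,x+y)V(x+y)t^{\rho-1}\ell(t)$, so the ratio LHS/RHS is dominated by $C_2 e^{-\alpha y}/c(z,x+y)$, which tends to $0$ as $y \to \infty$; the stated inequality then holds for $y$ and subsequently $t$ large enough. The main technical obstacle is the convolution estimate of the previous paragraph: one must carefully balance the blow-up of $\phi_\alpha(t-s)$ as $s \uparrow t$ against the Spitzer-controlled density of $\sigma$ near $t$, and this balance only works in a specific sub-interval of $\alpha$ for which both \textbf{(H3)} (to control the tail of $\phi_\alpha$) and \textbf{(H4)} (to enable the interpolation $\alpha < \beta$) are essential.
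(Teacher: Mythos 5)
Your opening moves match the paper's: the quenched bound $\P_{(z,x)}(Z_t>0\mid\overline{K})\le 1\wedge ze^{-x}v_t(0,\infty,\overline{K})$, the \textbf{(H4)}-driven estimate $v_t(0,\infty,\overline{K})\le(\beta C\mathcal{I}_t(\beta\overline{K}))^{-1/\beta}$ with $\mathcal{I}_t(\beta\overline{K})=\int_0^t e^{-\beta\overline{K}_s}\,\ud s$, and the strong Markov step at $\tau_{-y}$ to extract a factor $e^{-\alpha y}$ are all in the paper. But the heart of the argument is the asymptotic behaviour of $\mathbb{E}^{(e)}\big[\mathcal{I}_s(\beta\overline{K})^{-q}\big]$ for large $s$, and there your proposal has a genuine gap. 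The bound $J_s\ge s\,e^{-\beta\sup_{u\le s}\overline{K}_u}$ together with Doob's maximal inequality applied to $\exp(\overline{K}_\cdot-\cdot\log\mathbb{E}^{(e)}[e^{\overline{K}_1}])$ does \emph{not} yield a polynomial upper bound on $\phi_\alpha(s)$: it gives $\phi_\alpha(s)\le s^{-\alpha/\beta}\,\mathbb{E}^{(e)}[e^{\alpha\sup_{u\le s}\overline{K}_u}]$, and the second factor grows \emph{exponentially} in $s$ for an oscillating L\'evy process (already for Brownian motion, $\mathbb{E}[e^{\alpha\sup_{u\le s}B_u}]\asymp e^{\alpha^2 s/2}$). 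The correct decay $\mathbb{E}^{(e)}[\mathcal{I}_s(\beta\overline{K})^{-q}]\sim C_{\beta,q,\rho}\,\ell(s)s^{\rho-1}$ is a nontrivial input: it comes from Theorem 2.20 of Patie--Savov (cited as \cite{patie2016bernstein}) and is precisely where \textbf{(H1)} and \textbf{(H3)} enter in the paper. Without it, the convolution estimate cannot close — on the bulk of the range $s\le t/2$ your bound on $\phi_\alpha(t-s)$ is exponentially large, not $O(t^{\rho-1}\ell(t))$.

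A secondary difference: the paper controls the singularity as $\tau_{-\tilde y}\uparrow t$ with the cheap truncation $1-e^{-r}\le 1\wedge r$ and a time discretization over unit intervals, treating the last $t_0$ units (where the asymptotic is not yet valid) via the Spitzer tail estimate \eqref{asymp_tauy}. Your interpolation $1-e^{-r}\le r^\alpha$ with $\alpha<\beta$ is a reasonable alternative device for the singular end, but it does not supply the large-time decay. If you take the Patie--Savov asymptotic as an input (which the paper does), the rest of your outline — convolution splitting, $V(y+x)=\mathcal{O}(y)$ versus $e^{-\alpha y}$, and invoking Lemma~\ref{L1preuve} — would go through; as written, however, the claimed "polynomial upper bound on $\phi_\alpha$" is the load-bearing step and it is false.
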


\begin{proof}  Recall that  for $s\in[0,t]$, the functional $v_t(s,\lambda, \overline{K})$  is  the $\mathbb{P}^{(e)}$-a.s. unique solution of  the backward differential equation 
\eqref{backward}.  We also recall that 
 the quenched survival probability satisfies 
 \begin{eqnarray}\label{majquenched}
\P_{(z,x)}(Z_t>0|\overline{K})= 1- e^{-zv_t(0,\infty,\overline{K}-x)}.
 \end{eqnarray}
From assumption {\bf (H3)} and definition \eqref{backward}, we obtain that for $s \leq t$ and $\lambda \geq 0$,
 \[
 \begin{split}
\frac{ \partial}{\partial s} v_t(s,\lambda  e^{-x},\overline{K} -x)& \geq C e^{\overline{K}_s-x} \left( v_t(s,\lambda e^{-x},\overline{K}-x) e^{-\overline{K}_s+x} \right)^{\beta+1}
 \\
 &= Cv^{\beta+1}_t(s,\lambda  e^{-x},\overline{K}-x) e^{-\beta (\overline{K}_s-x)}.
 \end{split}
 \]
 This yields
 $$ \frac{1}{v^\beta_t(0,\lambda e^{-x},\overline{K}-x)}-\frac{1}{\lambda^\beta}\geq \beta C \mathcal{I}_t(\beta(\overline{K}-x)) ,$$
 where
 \[
\mathcal{I}_t(\beta (\overline{K}-x)):= \int_0^te^{-\beta (\overline{K}_s-x)}\ud s.
 \]
Letting $\lambda$ go to $\infty$, we obtain
 \begin{equation}\label{majv}
  v_t(0,\infty,\overline{K}-x)\leq \left(\beta C \mathcal{I}_t(\beta (\overline{K}_s-x))\right)^{-1/\beta} . 
 \end{equation}
 Using \eqref{majquenched} and \eqref{majv}, we get the following upper bound
\begin{multline}\label{term_to_bound}
 \P_{(z,x)}(Z_t>0,I_{t-\delta}<-y)\leq  \mathbb{E}^{(e)}_x \left[ \left(1 -e^{-z \left(\beta C \mathcal{I}_t(\beta (\overline{K}-x)) \right)^{-1/\beta}} \right)\mathbf{1}_{\{I_{t-\delta}<-y\}} \right]\\
 =\mathbb{E}^{(e)} \left[ \left(1 -e^{-z \left(\beta C \mathcal{I}_t(\beta (\overline{K})) \right)^{-1/\beta}} \right)\mathbf{1}_{\{I_{t-\delta}<-y-x\}} \right].
\end{multline}

On the other hand,  under assumption {\bf (H1)},  Theorems 2.18 and 2.20 in \cite{patie2016bernstein} guarantee that 
for $q\in(0,1)$,    
\[
\mathbb{E}^{(e)} \left[ \mathcal{I}_t(\beta \overline{K})^{-q} \right]<\infty, \qquad t>0,
\] 
and   for $F\in C_b(\mathbb{R}_+)$ 
\[
 \lim_{t\to\infty} \frac{\mathbb{E}^{(e)} \left[ \mathcal{I}_t(\beta \overline{K})^{-q} F(\mathcal{I}_t(\beta \overline{K})\right]}{\widehat{\kappa}(1/t,0)}= \int_0^\infty F(x)\nu_{q, \rho}(\ud x),
 \]
 where $\nu_{q, \rho}$ is a finite measure on $(0,\infty)$, see equation (2.46) in \cite{patie2016bernstein} for further details about $\nu_{q, \rho}$. Thus, by taking $F_z(x)=x^q(1-e^{-z C_\beta x^{-1/\beta}})$ with $C_\beta=(\beta C)^{-1/\beta}$, we deduce
 \begin{equation} \label{equiv_mathcal_I}
 \lim_{t\to\infty} \frac{\mathbb{E}^{(e)} \left[ 1 -e^{-z C_\beta\left(\mathcal{I}_t(\beta (\overline{K})) \right)^{-1/\beta}} \right]}{\widehat{\kappa}(1/t,0)}= \int_0^\infty x^q(1-e^{-zC_\beta x^{-1/\beta}})\nu_{q, \rho}(\ud x) =: C_{\beta, q, \rho}(z),
 \end{equation}
where the last notation has been introduced for the sake of readability.
Hence, in particular from \eqref{asymp_kappa_hat} we have
 \begin{equation*}
  \mathbb{E}^{(e)} \left[ 1 -e^{-z C_\beta\left(\mathcal{I}_t(\beta (\overline{K})) \right)^{-1/\beta}} \right] \sim \frac{ \Gamma(1+\rho)C_{\beta, q, \rho}(z)}{\rho} t^{\rho-1}\ell(t), 
  \qquad \textrm{as} \quad t \to \infty,
 \end{equation*}
where $\ell$ is the slowly varying function at $\infty$ introduced in \eqref{asymp_kappa_hat}. The latter  implies that there exists $t_0$ such that if $t \geq t_0$,
 \begin{equation} \label{asymp_mathcalI}  \mathbb{E}^{(e)} \left[ 1 -e^{-z C_\beta\left(\mathcal{I}_t(\beta (\overline{K})) \right)^{-1/\beta}} \right] \leq 2 \frac{\Gamma(1+\rho)C_{\beta, q, \rho}(z)}{\rho} t^{\rho-1}\ell(t).
 \end{equation}
Next,  we recall  from \eqref{infimumspitzer} that
 \begin{equation*}
 \mathbb{P}^{(e)}(I_t >-y) = \mathbb{P}^{(e)}_{y}(I_t >0)  \sim \widehat{V}(y)t^{\rho-1}\ell(t), \qquad \textrm{as} \quad t \to \infty.
 \end{equation*}
On the other hand from  Potter's Theorem (see Theorem 1.5.6 in Bingham et al. \cite{bingham1989regular}), we  deduce that  for any $A>1$ and $\delta_1>0$ there exists $t_1:=t_1(A_1, \delta_1)$ such that for $s\ge  h\ge t_1$, 
\[
 \frac{\mathbb{P}^{(e)}(I_h >-y)}{\mathbb{P}^{(e)}(I_{s} >-y)}\leq A\left( \frac{s}{h} \right)^{1-\rho+\delta_1}.
\]
Let us fix $A>1$ and $\delta_1>0$ and  introduce $\tau_{-y}=\inf\{t: \overline{K}_t\le -y\},$ the first hitting time of $-y$ by $\overline{K}$. 
The previous inequality implies that for $s\ge h\geq t_2:=t_0\lor t_1$,
\begin{align} \label{asymp_tauy}
 \mathbb{P}^{(e)}(h < \tau_{-y} \leq s) & = 
 \mathbb{P}^{(e)}(I_h >-y)-\mathbb{P}^{(e)}(I_{s} >-y) \nonumber \\& = 
 \mathbb{P}^{(e)}(I_{s} >-y)\left(\frac{\mathbb{P}^{(e)}(I_h >-y)}{\mathbb{P}^{(e)}(I_{s} >-y)}-1\right) \nonumber \\& \leq \left(A\left( \frac{s}{h} \right)^{1-\rho+\delta_1}-1\right)\mathbb{P}^{(e)}(I_{s} >-y).
\end{align}
For simplicity, we introduce the notation $\tilde{y}=y+x$.
Hence  from the property of independent increments of $\overline{K}$, we get the following sequence of inequalities, for $t\ge 3t_2$,
\[
\begin{split}
 \mathbb{E}^{(e)} \bigg[ &\left(1 -e^{-z C_\beta \left( \mathcal{I}_t(\beta (\overline{K})) \right)^{-1/\beta}} \right),\tau_{-\tilde{y}} \leq t -\delta\bigg] \\
 & \hspace{.5cm}  \leq   \mathbb{E}^{(e)} \left[ \left(1 -e^{-z C_\beta\left( \int_{\tau_{-\tilde{y}}}^{t} 
 e^{- \beta \overline{K}_s}\ud s \right)^{-1/\beta}} \right)\mathbf{1}_{\{\tau_{-\tilde{y}} \leq t -\delta\}}\right]\\
 & \hspace{.5cm} \leq  \mathbb{E}^{(e)} \left[ \left(1 -\exp\left\{-z C_\beta e^{-\tilde{y}}\left( \int_{0}^{t-\tau_{-\tilde{y}}} 
 e^{- \beta \big(\overline{K}_{\tau_{-\tilde{y}}+u}-\overline{K}_{\tau_{-\tilde{y}}}\big)}\ud s\right)^{-1/\beta}\right\} \right)\mathbf{1}_{\{\tau_{-\tilde{y}} \leq t -\delta\}}\right]\\
  &\hspace{.5cm}\leq  \mathbb{E}^{(e)} \left[ \left(1 -e^{-z C_\beta e^{-\tilde{y}} \left(\mathcal{I}_{\frac{t+t_2}{2}}(\beta \overline{K})\right)^{-1/\beta}} \right)\right] \\
 &\hspace{2cm}+\mathbb{E}^{(e)} \left[ \left(1 -e^{-z C_\beta e^{-\tilde{y}} \left(\mathcal{I}_{\delta}(\beta \overline{K})\right)^{-1/\beta}} \right)\right]  \mathbb{P}^{(e)} \left(  \frac{t-t_2}{2}<\tau_{-\tilde{y}} \leq t-\delta\right).
 \end{split}
 \]
 Thus from \eqref{asymp_mathcalI}, \eqref{asymp_tauy} and \eqref{infimumspitzer}, we have
 \[
 \begin{split}
\mathbb{E}^{(e)} \bigg[ &\left(1 -e^{-z C_\beta \left( \mathcal{I}_t(\beta (\overline{K})) \right)^{-1/\beta}} \right),\tau_{-\tilde{y}} \leq t -\delta\bigg]\\
& \le 2^{2-\rho} \frac{\Gamma(1+\rho)C_{\beta, q, \rho}(ze^{-\tilde{y}})}{\rho} (t+t_2)^{\rho-1}\ell\left(\frac{t+t_2}{2}\right)\\
 &+\left(A2^{1-\rho+\delta_1}\left( 1+\frac{t_2-\delta}{2t_2} \right)^{1-\rho+\delta_1}-1\right)\hat{V}(\tilde{y})(t-\delta)^{\rho-1}\ell(t-\delta)\\
 &\hspace{5cm}\times \mathbb{E}^{(e)} \left[ \left(1 -e^{-z C_\beta e^{-\tilde{y}} \left(\mathcal{I}_{\delta}(\beta \overline{K})\right)^{-1/\beta}} \right)\right] .
 \end{split}
 \]
 Next, we introduce
 \[
 \begin{split}
 c_1(z,\tilde{y})= &\left(2^{2-\rho} \frac{\Gamma(1+\rho)C_{\beta, q, \rho}(ze^{-\tilde{y}})}{\rho} \right .\\
 &\left. \vee \left(A2^{1-\rho+\delta_1}\left( 1+\frac{t_2-\delta}{2t_2} \right)^{1-\rho+\delta_1}-1\right)\mathbb{E}^{(e)} \left[ 1 -e^{-z C_\beta e^{-\tilde{y}} \left(\mathcal{I}_{\delta}(\beta \overline{K})\right)^{-1/\beta}} \right]\right).
 \end{split}
 \]
Therefore from \eqref{term_to_bound} and again from Potter's Theorem, we get  for $t\ge 3t_2$
 \[
 \begin{split}
  \frac{ \P_{(z,x)}(Z^\uparrow_t>0,I_{t-\delta}<-y)}{t^{\rho-1}\ell(t)}&\leq 
c_1(z,\tilde{y})\left( \frac{\ell\left(\frac{t+t_2}{2}\right)}{\ell(t)} + \left(1-\frac{\delta}{3t_2}\right)^{\rho-1} \widehat{V}(y+x)\frac{\ell\left(t-\delta\right)}{\ell(t)}  \right)\\
&\le c_1(z,\tilde{y})A \left( 2^{\delta_1} + \left(1-\frac{\delta}{3t_2}\right)^{\rho-1-\delta_1} \widehat{V}(y+x) \right).
 \end{split}
 \]
 Finally, we observe that the map $x\mapsto x^q(1-e^{-z C_\beta e^{-\tilde{y}} x^{-1/\beta}})$ is bounded and goes to $0$ as $y$ goes to $\infty$. Similarly the r.v. $1 -e^{-z C_\beta e^{-\tilde{y}} \left(\mathcal{I}_{\delta}(\beta \overline{K})\right)^{-1/\beta}}$ is bounded by one and goes to $0$, $\mathbb{P}^{(e)}$-a.s., as
$y$ goes to $\infty$. Thus by the Dominated Convergence Theorem, we have
\[
C_{\beta, q, \rho}(ze^{-\tilde{y}})=\int_0^\infty x^q(1-e^{-zC_\beta e^{-\tilde{y}}x^{-1/\beta}})\nu_{q, \rho}(\ud x)\xrightarrow[y\to \infty]{} 0,
\]
and
\[
\mathbb{E}^{(e)} \left[ 1 -e^{-z C_\beta e^{-\tilde{y}} \left(\mathcal{I}_{\delta}(\beta \overline{K})\right)^{-1/\beta}} \right]\xrightarrow[y\to\infty]{} 0.
\]
 In other words $c_1(z,\tilde{y})\to 0$, as $y$ increases. This implies that
 \[
\lim_{y\to\infty}
 \limsup_{t\to\infty} \frac{ \P_{(z,x)}(Z^\uparrow_t>0,I_{t-\delta}<-y)}{t^{\rho-1}\ell(t)} =0,
 \]
since $\widehat{V}(y)=\mathcal{O}(y)$ and 
for $y\rightarrow \infty$.
\end{proof}

Using Lemmas \ref{L1preuve} and \ref{L2preuve}, we are now able to conclude the proof of our main result.

\begin{proof}[Proof of Theorem \ref{maintheo}]
Let $z,x,\eps>0$ and  $\delta\in(0, 1)$. From Lemma \ref{L2preuve}, we can choose $y$ such that for $t$ large enough,
\[
 \P_{(z,x)}(Z_t>0,I_{t-\delta}<-y) \le \eps \P_{(z,x+y)}(Z_t>0,I_t > 0). 
 \]
Hence we deduce
\[
\begin{split} \P_{z}(Z_t>0) &=\P_{(z,x)}(Z_t>0,I_{t-\delta}>0)\\
&\hspace{1cm}+ \P_{(z,x)}(Z_t>0,-y<I_{t-\delta}\leq 0)+ \P_{(z,x)}(Z_t>0,I_{t-\delta}\leq -y)\\
&\le \P_{(z,x+y)}(Z_t>0,I_{t-\delta}>0) + \eps\P_{(z,x+y)}(Z_t>0,I_t> 0) \\
&\le \P_{(z,x+y)}(Z_{t-\delta}>0,I_{t-\delta}>0) + \eps\P_{(z,x+y)}(Z_t>0,I_t> 0) \\
&\le \left(\frac{\P_{(z,x+y)}(Z_{t-\delta}>0,I_{t-\delta}>0)}{\P_{(z,x+y)}(Z_t>0,I_t> 0)} + \eps\right)\P_{(z,x+y)}(Z_t>0,I_t> 0).
 \end{split}
 \]
 From Lemma \ref{L1preuve}, we know
\begin{equation} \label{asymp_tgrand}
\begin{split}
\P_{(z,x+y)}(Z_t>0,I_t>0) &\sim c(z,x+y) \mathbb{P}^{(e)}_{x+y}(I_t>0)\\
& \sim c(z,x+y)   \widehat{V}(x+y)t^{-(1-\rho)}\ell(t), \quad \text{as }  t \to \infty.
\end{split}
\end{equation}
From Potter's Theorem (see Theorem 1.5.6 in Bingham et al. \cite{bingham1989regular}), we  deduce that  for  any $A>1$ and $\delta_1>0$ there exists $t_1:=t_1(A_1,\delta_1)$ such that  
\[
\P_{z}(Z_t>0) \le \left(A\left(1+\frac{\delta}{t_1-\delta}\right)^{1-\rho+\delta_1} + \eps\right)\P_{(z,x+y)}(Z_t>0,I_t> 0).
 \]
In other words, for every $\eps>0$, there exists $y^\prime>0$ such that
$$ \P_{(z,y^\prime)}(Z_t>0,I_t>0) \leq \P_{z}(Z_t>0) \leq \left(A\left(1+\frac{\delta}{t_1-\delta}\right)^{1-\rho+\delta_1} + \eps\right)\P_{(z,y^\prime)}(Z_t>0,I_t> 0), $$
for some $A>1$ and $\delta_1>1$. Recall that $y'$ is a sequence which may depend on $z$ and $\eps$ and goes to  infinity as $\eps$ goes to $0$.  Thus, let us  take any sequence $y(z,\eps)$ satisfying for any $z,\eps>0$
\begin{equation} \label{prelim} 
\begin{split}
\P_{(z,y(z,\eps))}(Z_t>0,I_t>0) &\leq \P_{z}(Z_t>0)\\ 
&\leq \left(A\left(1+\frac{\delta}{t_1-\delta}\right)^{1-\rho+\delta_1} + \eps\right)\P_{(z,y(z,\eps))}(Z_t>0,I_t> 0),
\end{split} 
\end{equation}
and prove that 
$$C(z):= \lim_{\eps \to 0}c(z,y(z,\eps))   \widehat{V}(y(z,\eps))$$
exists and  is positive and finite. Dividing equation \eqref{prelim} by $t^{\rho-1}\ell(t)$ and using \eqref{asymp_tgrand}{\color{blue},} we deduce
\[
\begin{split}
0<c(z,y(z,\eps))   \widehat{V}(y(z,\eps))&\leq \liminf_{t \to \infty} \frac{ \P_{z}(Z_t>0)}{t^{\rho-1}\ell(t)}\\
&\leq \left(A\left(1+\frac{\delta}{t_1-\delta}\right)^{1-\rho+\delta_1} + \eps\right) c(z,y(z,\eps))  \widehat{V}(y(z,\eps))<\infty. 
\end{split}
\]
Now, letting $\delta$ goes to $0$ and then $\eps$ tends to $0$,  we  get
\begin{align*} 0<\limsup_{\eps\to 0}c(z,y(z,\eps))   \widehat{V}(y(z,\eps))&\leq \liminf_{t \to \infty} \frac{ \P_{z}(Z_t>0)}{t^{-(1-\rho)}\ell(t)}\\
&\leq \liminf_{\eps \to 0}\Big( A+\eps\Big)c(z,y(z,\eps))  \widehat{V}(y(z,\eps))\\ 
&= A\liminf_{\eps \to 0}c(z,y(z,\eps))   \widehat{V}(y(z,\eps))<\infty. \end{align*}
 Since $A$   can be taken arbitrarily close to 1, the inferior and superior limits (when $\eps$ goes to $0$) of the sequence $c(z,y(z,\eps))   \widehat{V}(y(z,\eps))$ are thus equal,  positive and finite. We thus deduce that this sequence has a limit $C(z)$ when $\eps$ goes to $0$, which is also the limit of
$$ \frac{ \P_{z}(Z_t>0)}{t^{-(1-\rho)}\ell(t)} \qquad   \textrm{when $t$ goes to $\infty$,}$$
 and obtain
$$ \P_{z}(Z_t>0) \sim C(z)t^{-(1-\rho)}\ell(t).$$
This completes the proof.
\end{proof}

\appendix

\section{Appendix} \label{appendix}

We provide in Appendix the proof of some technical results for the sake of completeness.

\begin{lemma}\label{conservative}
If \eqref{finitemom} holds then the process $Z$ is conservative, i.e.
\[
\mathbb{P}_z(Z_t<\infty)=1 , \qquad \textrm{for any} \qquad t\ge 0,
\]
and any starting point $z\ge 0$.
\end{lemma}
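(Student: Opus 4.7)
The plan is to condition on the environment $K$ and analyze the quenched Laplace transform of $Z_t$. Under hypothesis \eqref{finitemom}, the branching mechanism $\psi_0$ defined in \eqref{psi0} is finite and locally Lipschitz on $[0,\infty)$, so the backward ODE \eqref{backward} admits, for each $\lambda>0$ and $t>0$, a unique $[0,\lambda]$-valued solution $s\mapsto v_t(s,\lambda,\overline{K})$ on $[0,t]$ for $\mathbb{P}^{(e)}$-almost every path of $\overline{K}$. Since $\psi_0\geq 0$, this solution is nondecreasing in $s$ and satisfies $v_t(t,\lambda,\overline{K})=\lambda$; hence $v_t(0,\lambda,\overline{K})\leq\lambda$. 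Moreover, by uniqueness of the ODE and the fact that $0$ is a fixed point, $v_t(s,\lambda,\overline{K})>0$ throughout $[0,t]$.

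The core step is to establish the quenched Laplace identity
\[
\mathbb{E}_z\!\left[e^{-\lambda Z_t}\mathbf{1}_{\{Z_t<\infty\}}\,\Big|\,K\right]=\exp\!\bigl(-z\, v_t(0,\lambda,\overline{K})\bigr),\qquad \lambda>0.
\]
For this I would introduce the stopping times $\tau_n:=\inf\{s\geq 0:Z_s\geq n\}$ and apply It\^o's formula, conditionally on $K$, to the process $s\mapsto \exp\!\bigl(-v_t(s,\lambda,\overline{K})Z_{s\wedge\tau_n}\bigr)$ on $[0,t]$. The backward ODE \eqref{backward} is tailored so that the drift contributions arising from the Brownian integral, the compensated Poisson integral (from the branching part) and the Lévy integral $\int Z_{s-}\,\mathrm{d}K_s$ (from the environment) in \eqref{csbplre} cancel exactly, leaving a uniformly bounded $\mathbb{P}^{(b)}$-martingale. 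Taking conditional expectations and letting $n\to\infty$ then gives the identity: on $\{\tau_\infty>t\}=\{Z_t<\infty\}$ the integrand converges to $e^{-\lambda Z_t}$, while on the explosion event $\{\tau_\infty\leq t\}$ it converges to $0$ because $Z_{\tau_n}\geq n\to\infty$ and $v_t(\tau_n,\lambda,\overline{K})\to v_t(\tau_\infty,\lambda,\overline{K})>0$. Dominated convergence then completes this step.

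To conclude, note that $0\leq v_t(0,\lambda,\overline{K})\leq\lambda\to 0$ as $\lambda\to 0^+$, so dominated convergence (all integrands are bounded by $1$) applied to both sides of the Laplace identity yields
\[
\mathbb{P}_z(Z_t<\infty)=\lim_{\lambda\to 0^+}\mathbb{E}\!\left[\exp\!\bigl(-z\,v_t(0,\lambda,\overline{K})\bigr)\right]=1,
\]
which is the claim. The main obstacle is the It\^o computation underpinning the Laplace identity: one must track the contributions from the diffusion, the compensated branching measure $\widetilde{N}^{(b)}$, and the environmental jumps encoded in $\int Z_{s-}\,\mathrm{d}K_s$, and verify that their combined drift is precisely $\partial_s v_t$ by virtue of \eqref{backward}. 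This is essentially the computation by which the backward ODE is derived in \cite{PP1}, used here in reverse and localized at $\tau_n$ to deduce conservativeness.
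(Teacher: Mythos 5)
Your proof is correct and follows the paper's strategy: both reduce conservativeness, via the quenched Laplace identity \eqref{defvtslambdaK}, to showing $\lim_{\lambda\to 0}v_t(0,\lambda,\overline{K})=0$, and both obtain the key bound $v_t(0,\lambda,\overline{K})\le\lambda$ from $\psi_0\ge 0$ --- you directly from $\partial_s v_t(s,\lambda,\overline{K})\ge 0$ together with the terminal condition $v_t(t,\lambda,\overline{K})=\lambda$, the paper from the mild form $v_t(s,\lambda,\overline{K})=\lambda\exp\{-\int_s^t\Phi(e^{-\overline{K}_r}v_t(r,\lambda,\overline{K}))\,\ud r\}$. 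One small slip worth noting: the identity \eqref{defvtslambdaK} from \cite{PP1} concerns $e^{-\lambda Z_te^{-\overline{K}_t}}$, not $e^{-\lambda Z_t}$, so your displayed quenched identity should be corrected accordingly (or, equivalently, keep $e^{-\lambda Z_t}$ but replace $v_t(0,\lambda,\overline{K})$ by $v_t(0,\lambda e^{\overline{K}_t},\overline{K})$); this does not affect the conclusion since $\overline{K}_t$ is a.s. finite, so the argument of $v_t$ still tends to $0$ as $\lambda\to 0$.
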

\begin{proof} Recall that there exists a functional $v_t(s,\lambda, \overline{K})$ which is  the unique solution of  the backward differential equation \eqref{backward} which  determines the law of the reweighted  process $(Z_t e^{-\overline{K}_t}, t\ge 0)$ as follows,
\begin{equation}\label{defvtslambdaK}
\mathbb{E}_{(z,x)}\Big[\exp\Big\{-\lambda Z_t e^{-\overline{K}_t}\Big\}\Big]=\mathbb{E}^{(e)}\Big[\exp\Big\{-zv_t(0,\lambda  e^{-x},\overline{K})\Big\}\Big].
\end{equation}
If we let $\lambda$ go to $0$ in the previous identity, we deduce
\[
\mathbb{P}_z\big(Z_t<\infty\big)=\underset{\lambda\downarrow 0}{\lim}\,\mathbb{E}_{(z,x)}\Big[\exp\Big\{-\lambda Z_t e^{-\overline{K}_t}\Big\}\Big]
=\mathbb{E}^{(e)}\left[\exp\left\{-z\lim_{\lambda \downarrow 0}v_t(0, \lambda e^{-x}, \overline{K})\right\}\right],
\]
where the limits are justified by monotonicity and dominated convergence. This implies that  the process $Z$  is conservative if and only if 
\[
\lim_{\lambda \downarrow 0}v_t(0,\lambda  e^{-x},\overline{K})=0,
\]
for every positive $t$.
Let us recall that the function $\Phi(\lambda)$ equals $\lambda^{-1}\psi_0(\lambda)$ and  observe that  $\Phi(0)=\psi'_0(0+)=0$ (see \eqref{WHbranching}). 
Since $\psi_0$ is convex and non-negative, we deduce that $\Phi$ is increasing. Finally, if we solve equation (\ref{backward}) with $\psi_0(\lambda)=\lambda \Phi(\lambda)$, we get 
$$v_t(s,\lambda  e^{-x},\overline{K})=\lambda e^{-x} \Expo{-\int_s^t \Phi(e^{-\overline{K}_r}v_t(r,\lambda e^{-x},\overline{K}))\ud r}.$$
Therefore, since $\Phi$ is increasing and $\Phi(0)=0$, we have
$$0\leq\underset{\lambda\rightarrow 0}{\lim}v_t(0,\lambda e^{-x},\overline{K})=\underset{\lambda\rightarrow 0}{\lim}\lambda e^{-x}\Expo{-\int_0^t \Phi(e^{-\overline{K}_r}v_t(r,\lambda  e^{-x},\overline{K}))\ud r}\leq \underset{\lambda\rightarrow 0}{\lim}\lambda e^{-x}=0,$$
implying that $Z$ is conservative.
\end{proof}

\begin{proof}[Proof of Proposition \ref{martingquenched}] By It\^o's formula, we have
\[
Z_te^{-\overline{K}_{t}}= Z_0+\int_0^t e^{-\overline{K}_s}\sqrt{2\gamma^2Z_s} \ud B^{(b)}_s+\int_0^t\int_{(0,\infty)}
\int_0^{Z_{s-}} ze^{-\overline{K}_{s-}}\widetilde{N}^{(b)}(\ud s, \ud z, \ud u),
\]
$\P$-a.s. Then, for $\P^{(e)}$ almost every $w^{(e)}$, we consider
$$Y_t^{w^{(e)}}=Y_0^{w^{(e)}}+M_t^{w^{(e)}}+N_t^{w^{(e)}}+W_t^{w^{(e)}}\qquad \P^{(b)}\textrm{-a.s.,}$$
 for any $t\geq 0$, where  $Y_t^{w^{(e)}}=Z_t(w^{(e)},.)\exp(-\overline{K}_t(w^{(e)}))$ and
\Bea
M_t^{w^{(e)}}&=&\int_0^t e^{-\overline{K}_s(w^{(e)})}\sqrt{2\gamma^2Z_s} \ud B^{(b)}_s \\
N_t^{w^{(e)}}&=&\int_0^t\int_{(0,1]}
\int_0^{Z_{s-}} ze^{-\overline{K}_{s-}(w^{(e)})}\widetilde{N}^{(b)}(\ud s, \ud z, \ud u),\\
W_t^{w^{(e)}}&=&\int_0^t\int_{[1,\infty)}
\int_0^{Z_{s-}} ze^{-\overline{K}_{s-}(w^{(e)})}\widetilde{N}^{(b)}(\ud s, \ud z, \ud u),
\Eea
are  $(\Omega^{(b)}, {\mathcal F}^{(b)}, \P^{(b)})$ local martingales. Let us now check that 
$Y^{w^{(e)}}$ is a  $(\Omega^{(b)}, {\mathcal F}^{(b)}, \P^{(b)})$ 
martingale by proving that the first moment of its supremum  on $[0,T]$ is finite, for any $T>0$. We consider the first time $\tau_N$ when $Y^{w^{(e)}}$ goes beyond $N$.
Using $\vert x \vert \leq 1+x^2$ and that $Y^{w^{(e)}}$ is bounded before the stopping time $\tau_N$, we get
$$
\E\left[\sup_{s < t\wedge \tau_N} Y_s^{w^{(e)}}\right] \leq 2+ \E\left[\sup_{s< t\wedge \tau_N} \Big(M_t^{w^{(e)}}\Big)^2\right] + \E\left[\sup_{s < t\wedge \tau_N} \Big(N_t^{w^{(e)}}\Big)^2\right]  +
\E\left[\sup_{s < t\wedge \tau_N} \Big\vert W_t^{w^{(e)}}\Big\vert  \right] .$$
Using that $\sup_{[0,T]} \vert \overline{K}(w^{(e)})\vert<\infty$,  we obtain
that  $e^{-\overline{K}(w^{(e)})}$ is bounded  before time $T$ (and the bound does not depend on $N$).  Thanks to Doob inequality applied to the stopped martingales, there exists $C_7$ (which does not depend on $N$) such that for any $t\leq T$,
\Bea
 \E\left[\sup_{s< t\wedge \tau_N } \Big(M_s^{w^{(e)}}\Big)^2 \right] &\leq & C_7 \int_0^t\E\left[\sup_{s< t\wedge \tau_N} Y_s^{w^{(e)}}\right]\ud s, \\
 \E\left[\sup_{s < t\wedge \tau_N} \Big(N_s^{w^{(e)}}\Big)^2 \right] &\leq & C_7\int_{[0,1]} z^2\mu(\ud z)  \int_0^t \E\left[\sup_{s< t\wedge \tau_N} Y_s^{w^{(e)}}\right]\ud s, \\
 \E\left[\sup_{s < t\wedge \tau_N} \Big\vert W_s^{w^{(e)}} \Big\vert \right] &\leq & C_7\int_{[1,\infty]} z\mu(\ud z)  \int_0^t \E\left[\sup_{s < t\wedge \tau_N} Y_s^{w^{(e)}}\right]\ud s.
\Eea
Then  Gronwall's Lemma ensures that
 there exists $C(T)$ such that for any $t\leq T$ and $N\geq 1$,
$\E\left[\sup_{s < t\wedge \tau_N} Y_s^{w^{(e)}}\right]\leq C(T)$. Letting  $N$ go to infinity completes the proof. 
\end{proof}

\vspace{.5cm}

{\bf Acknowledgements:} {\sl The authors are very grateful to the anonymous referees for their thorough review. This work  was partially funded by the Chair "Mod\'elisation Math\'ematique et Biodiversit\'e" of VEOLIA-Ecole Polytechnique-MNHN-F.X and ANR ABIM 16-CE40-0001. JCP  acknowledge support from  the Royal Society and CONACyT (CB-250590). 
This work was concluded whilst JCP was on sabbatical leave holding a David Parkin Visiting Professorship  at the University of Bath. 
He gratefully acknowledges the kind hospitality of the Department and University. }

\bibliographystyle{abbrv}
\bibliography{biblio}

\end{document}